\newcommand{\foot}[1]{}
\newcommand{\hspec}{h_\mathrm{spec}^\perp}
\newcommand{\hexp}{h_\mathrm{exp^+}^\perp}
\newcommand{\uh}{\overline{h}}
\newcommand{\lh}{\underline{h}}
\newcommand{\NN}{\mathbb{N}}
\newcommand{\RR}{\mathbb{R}}
\newcommand{\AAA}{\mathcal{A}}
\newcommand{\CCC}{\mathcal{C}}
\newcommand{\LLL}{\mathcal{L}}
\newcommand{\DDD}{\mathcal{D}}
\newcommand{\SSS}{\mathcal{S}}
\newcommand{\PPP}{\mathcal{P}}
\newcommand{\WWW}{\mathcal{W}}
\newcommand{\Mfe}{\mathcal{M}_f^e}
\newcommand{\GGG}{\mathcal{G}}
\newcommand{\htop}{h_\mathrm{top}}
\newcommand{\ulim}{\varlimsup}
\newcommand{\llim}{\varliminf}
\newcommand{\eps}{\varepsilon}
\newcommand{\symdiff}{\bigtriangleup}
\newcommand{\ph}{\varphi}
\newcommand{\Lamb}{\Lambda}
\newcommand{\NE}{\mathcal{N}^+}
\newtheorem{definition}{Definition}[section]
\newtheorem{theorem}{Theorem}[section]
\newtheorem{lemma}[theorem]{Lemma}
\newtheorem{proposition}[theorem]{Proposition}
\newtheorem{thma}{Theorem}
\theoremstyle{remark}
\newtheorem*{remark}{Remark}
\DeclareMathOperator{\diam}{diam}
\numberwithin{equation}{section}
\begin{document}

\title{Intrinsic ergodicity via obstruction entropies}
\author{Vaughn Climenhaga}
\author{Daniel J. Thompson}
\address{Department of Mathematics, University of Houston, Houston, Texas 77204}
\address{Department of Mathematics, The Ohio State University, 100 Math Tower, 231 West 18th Avenue, Columbus, Ohio 43210}
\email{climenha@math.uh.edu}
\email{thompson@math.osu.edu}
\date{\today}
\thanks{V.C.\ was supported by an NSERC Postdoctoral Fellowship.  D.T.\ is supported by NSF grant DMS-$1101576$}

\begin{abstract}
Bowen showed that a continuous expansive map with specification has a unique measure of maximal entropy.  We show that the conclusion remains true under weaker non-uniform versions of these hypotheses.  To this end, we introduce the notions of obstructions to expansivity and specification, and show that if the entropy of such obstructions is smaller than the topological entropy of the map, then there is a unique measure of maximal entropy.
\end{abstract}

\maketitle

\section{Introduction}

Bowen showed that expansivity and specification imply intrinsic ergodicity~\cite{rB74} -- that is, existence of a unique measure of maximal entropy.  
In~\cite{beta-factor}, we introduced a non-uniform version of specification for a shift space which guarantees intrinsic ergodicity. In this paper, we establish our techniques in a non-symbolic setting. The key idea of ~\cite{beta-factor} is that the obstructions to specification should have less entropy than the whole space. 
Here we adapt this idea to a non-symbolic setting, and 
replace expansivity (which is automatically satisfied by a shift space) with a condition which says that obstructions to positive
 expansivity should have less entropy than the whole space (we work with positive expansivity rather than expansivity for convenience only, see remark after Definition \ref{def:hexp}).


After introducing precise definitions of $\hspec(f)$, which denotes the entropy of obstructions to specification, and $\hexp(f)$, which denotes the entropy of obstructions to expansivity, we establish the following result:

\begin{thma}\label{thm:main0}
Let $X$ be a compact metric space and $f\colon X\to X$ a continuous map.  
If $\hspec(f) < \htop(f)$ and $\hexp(f)<\htop(f)$,
then $f$ is intrinsically ergodic. 
\end{thma}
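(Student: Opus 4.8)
The plan is to follow Bowen's classical two-part strategy—first produce a measure of maximal entropy (MME), then prove uniqueness via a Gibbs-type estimate—while using the hypotheses $\hexp(f)<\htop(f)$ and $\hspec(f)<\htop(f)$ to show that the orbit segments obstructing expansivity and specification are too few (in entropy) to interfere with either step. Throughout write $h=\htop(f)$, let $\mc M_f(X)$ denote the invariant Borel probability measures, and let $B_n(x,\eps)$ denote the Bowen $(n,\eps)$-ball.

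First I would handle \emph{existence}. Take $\mu_n\in\mc M_f(X)$ with $h_{\mu_n}(f)\to h$ and, by weak$^*$ compactness, pass to a limit $\mu_n\to\mu$. Entropy can only be lost in this limit through the non-expansive behaviour of the system, and at a fixed scale $\eps$ the defect of upper semicontinuity of $\mu\mapsto h_\mu(f)$ is governed by the entropy carried on obstructions to (positive) expansivity. Since $\hexp(f)<h$, that defect is strictly below $h$, so the limit cannot shed all of its entropy; I expect to conclude $h_\mu(f)=h$, producing an MME. Concretely one fixes $\eps$ below the expansivity scale and controls the tail/non-expansive entropy at that scale by $\hexp(f)$.

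Next comes the \emph{counting lower bound}. Let $\mc G$ be the collection of good orbit segments on which specification holds, arranged so that the complementary obstructing collection has entropy at most $\hspec(f)<h$. Gluing good segments by specification and passing to maximal $(n,\eps)$-separated subsets, I would show that the number $\Lambda_n(\eps)$ of $(n,\eps)$-separated good orbit segments satisfies $\tfrac1n\log\Lambda_n(\eps)\to h$ for small $\eps$. This is the usual ``specification yields $e^{nh}$ separated orbits'' count transported to $\mc G$; discarding the obstructing segments costs only $\hspec(f)<h$ and is therefore exponentially negligible.

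The core is a two-sided Gibbs bound. Fix $\eps$ small. Via Katok's entropy formula together with $\hexp(f)<h$, any MME $\mu$ is essentially expansive at scale $\eps$, so its entropy is already seen by the balls $B_n(x,\eps)$; combining this with Shannon–McMillan–Breiman and feeding in $\Lambda_n(\eps)\asymp e^{nh}$ gives an upper bound $\mu(B_n(x,\eps))\le C\,e^{-nh}$ on a full-measure set away from the obstructions, while the specification construction gives the matching $\mu(B_n(x,\eps))\ge c\,e^{-nh}$. For \emph{uniqueness}, two ergodic MMEs $\mu_1,\mu_2$ would then be mutually absolutely continuous, with Radon–Nikodym density bounded above and below on the algebra generated by Bowen balls; since two equivalent ergodic invariant measures coincide, $\mu_1=\mu_2$. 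The hard part will be the bookkeeping around the obstruction sets: I must show that an MME assigns no mass to the non-expansive set or to the specification-obstruction set, and that the discarded entropy stays \emph{uniformly} strictly below $h$ across all of the above estimates. This is exactly where $\hexp(f)<h$ and $\hspec(f)<h$ are indispensable, and I expect controlling these obstruction sets to be the main obstacle.
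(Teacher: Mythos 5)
Your high-level plan (Bowen's two steps, with the hypotheses used to make MMEs almost expansive and to make obstructions entropically negligible) points in the same direction as the paper, but both steps as you describe them have genuine gaps, and the uniqueness gap is fatal.

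On existence: your argument rests on the claim that the defect of upper semicontinuity of $\mu\mapsto h_\mu(f)$ is controlled by $\hexp(f)$. This is not a standard fact and you do not prove it; semicontinuity defects are governed by tail entropy, which is a different quantity from the entropy of measures charging the non-expansive set. Worse, even granting the claim, the logic does not close: a defect bounded by $\hexp(f)<\htop(f)$ yields only $h_\mu(f)\geq \htop(f)-\hexp(f)>0$, not $h_\mu(f)=\htop(f)$ — preventing the limit from shedding \emph{all} its entropy is not the same as preventing it from shedding \emph{any}. The paper avoids semicontinuity entirely: it shows that $\hexp(f,\eps)<h(X)$ forces $h(X,\eps)=h(X)$ (Proposition \ref{prop:entropy-appeared}, using partitions adapted to maximal $(n,\eps)$-separated sets, which are generating for any ergodic measure of entropy above $\hexp(f,\eps)$), and then the standard construction — a weak* limit of averaged empirical measures on maximal separated sets — gives $h_\mu(f)\geq h(X,\eps)=h(X)$ (Lemma \ref{lem:MME}). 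A repaired version of your step would likewise have to route through generating partitions for the approximating measures rather than through an off-the-shelf semicontinuity principle.

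On uniqueness: the two-sided Gibbs estimate at the core of your argument is not available here, and nothing you sketch produces it. The lower bound coming from specification is proved only for the particular constructed measure $\mu$, not for an arbitrary ergodic MME, and it holds only for orbit segments $(x,n)$ in the good collection $\GGG^M$ (Lemma \ref{lem:GibbsG}); extending it to all $(x,n)$ is precisely what the non-uniform hypothesis forbids. No upper bound $\mu(B_n(x,\eps))\leq Ce^{-nh}$ is established anywhere — the paper never proves one, and Brin--Katok gives only an almost-everywhere bound with subexponential error. So the assertion that two ergodic MMEs are mutually absolutely continuous with densities controlled on Bowen balls has no support: neither measure is known to satisfy the lower Gibbs bound, and the one measure that does satisfy it (the constructed $\mu$) is not known to be ergodic — in this setting ergodicity of $\mu$ is itself a nontrivial theorem (Proposition \ref{prop:reallyweakmix}), proved via a two-gap Gibbs estimate (Lemma \ref{lem:Gibbsish}), a step your proposal omits entirely. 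The paper's substitute for the missing upper bound is the counting lemma (Lemma \ref{lem:posmeas}): any MME $\nu$ has entropy exceeding $\hexp(f,28\eps)$, hence is almost positively expansive, hence adapted partitions are generating for $\nu$, and an entropy computation shows that any set of $\nu$-measure at least $\gamma$ must meet at least $C_\gamma e^{nh(X)}$ atoms of the adapted partition; by Lemma \ref{lem:lotsinG} most such atoms contain good orbit segments, so the Gibbs lower bound for $\mu$ forces $\mu$ to charge these sets, contradicting $\nu\perp\mu$. Without this (or an equivalent) replacement for the upper Gibbs bound, your absolute-continuity argument cannot be completed.
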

The obstruction entropies $\hspec(f)$ and $\hexp(f)$ are obtained as limits of quantities $\hspec(f,\epsilon)$ and $\hexp(f,\epsilon)$, which are the entropies of obstructions to specification and expansivity at a fixed finite scale $\epsilon$.  Theorem A is a consequence of the following result, in which we consider obstruction entropies only at suitable fixed scales.
\begin{thma}\label{thm:main}
Let $X$ be a compact metric space and $f\colon X\to X$ a continuous map.  Suppose $\epsilon>0$ is such that $\hspec(f, \epsilon) < \htop(f)$ and $\hexp(f, 28\epsilon) < \htop(f)$.  
Then $f$ is intrinsically ergodic. 
\end{thma}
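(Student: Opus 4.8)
The plan is to adapt Bowen's classical argument --- positive expansivity together with specification yields a unique measure of maximal entropy --- using the two obstruction-entropy hypotheses to localize the reasoning away from the sets where these uniform properties fail. The backbone is a pair of Gibbs-type bounds at the single scale $\epsilon$: an upper bound $\mu(B_n(x,\epsilon)) \le K_2 e^{-n\htop(f)}$ coming from the entropy of $\mu$ together with the expansivity hypothesis, and a lower bound $\mu(B_n(x,\epsilon)) \ge K_1 e^{-n\htop(f)}$ coming from specification on the good collection of orbit segments. Once both bounds hold for every ergodic measure of maximal entropy, any two such measures are mutually absolutely continuous with Radon--Nikodym derivative bounded in $[K_1/K_2,\,K_2/K_1]$, hence, being ergodic, equal; this is the asserted intrinsic ergodicity.

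First I would use $\hexp(f,28\epsilon) < \htop(f)$ to pass to a fixed scale. For an ergodic $\mu$ with $h_\mu(f)=\htop(f)$, the Katok and Brin--Katok local entropy formulas ordinarily require sending the scale to zero; the content of the expansivity condition is that the points at which orbits fail to separate at scale $\epsilon$ can carry entropy at most $\hexp(f,28\epsilon) < \htop(f)$, so they are $\mu$-negligible for entropy purposes. This lets me replace the limit in scale by the single scale $\epsilon$ and conclude that for $\mu$-a.e.\ $x$ and all large $n$ one has $\mu(B_n(x,\epsilon)) \le K_2 e^{-n\htop(f)}$. The inflated scale $28\epsilon$ is chosen precisely so that this bound survives the later shadowing and gluing, where Bowen balls must be compared at scales that are fixed multiples of $\epsilon$.

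Next I would use $\hspec(f,\epsilon) < \htop(f)$ for the matching lower bound. Since the obstruction to specification has entropy strictly below $\htop(f)$, a full-exponential proportion of any near-maximal $(n,\epsilon)$-separated set lies in the good collection $\mathcal{G}$, and specification allows these good segments to be concatenated with bounded gaps. Counting concatenations of good segments of total length $n$ yields at least $\gtrsim e^{n\htop(f)}$ separated orbits, while the definition of $\htop(f)$ caps the count from above; the resulting near-equidistribution of $\mu$ over the $(n,\epsilon)$-separated Bowen balls forces $\mu(B_n(x,\epsilon)) \ge K_1 e^{-n\htop(f)}$ along good orbits. Combining this with the upper bound of the previous step gives the two-sided Gibbs property on a full-measure set, from which uniqueness follows as described above.

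The main obstacle is this lower bound, intertwined with the bookkeeping of scales that threads through both estimates. Specification introduces gluing gaps, and the shadowing constants compound when good segments are concatenated, so the separation of the resulting orbits --- and hence the validity of the covering and counting estimates --- must be controlled at a scale that is an explicit multiple of $\epsilon$ yet still dominated by the $28\epsilon$ at which expansivity is assumed; this is exactly why the two hypotheses are stated at different scales. Verifying that the full-entropy Katok estimates survive the passage into $\mathcal{G}$ (discarding the sub-exponentially many obstruction segments) and through the gluing, uniformly in $n$ and independently of the choice of ergodic maximizing measure, is the technical heart of the argument.
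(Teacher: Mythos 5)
Your overall architecture---prove a two-sided Gibbs bound $K_1 e^{-n\htop(f)} \leq \nu(B_n(x,\epsilon)) \leq K_2 e^{-n\htop(f)}$ for \emph{every} ergodic MME $\nu$, then conclude that any two ergodic MMEs are equivalent with bounded Radon--Nikodym derivative---contains gaps that are not technicalities, and the lower bound is the fatal one. A specification-based counting argument converts orbit counts into measure lower bounds only for a measure that is \emph{built} from those counts, namely the weak* limit of the averaged empirical measures $\frac1n\sum_{k=0}^{n-1}f_*^k\nu_n$ on maximal separated sets; this is exactly what the paper does (Lemma~\ref{lem:MME}, Lemma~\ref{lem:GibbsG}), and even there the lower Gibbs bound holds only at scale $7\epsilon$ and only for orbit segments in the regular collection $\GGG^M$. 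For an arbitrary ergodic MME there is no mechanism at all: the only general tool, Brin--Katok (even after using $\hexp(f,28\epsilon)<\htop(f)$ to work at a fixed scale), gives bounds of the form $e^{-n(\htop(f)\pm\delta)}$ valid a.e.\ for $n\geq N(x)$, with exponential error and no uniform constant. Your upper bound has the same defect: almost positive expansivity makes adapted partitions $\AAA^n$ generating for $f^n$, which yields the \emph{averaged} inequality $nh(X)=h_\nu(f^n)\leq H_\nu(\AAA^n)$, but an averaged entropy inequality does not force each individual atom or Bowen ball to have measure at most $K_2e^{-n\htop(f)}$. With errors $e^{\pm n\delta}$ on both sides, the Radon--Nikodym ratio is controlled only by $e^{2n\delta}$, which is unbounded, so mutual absolute continuity does not follow. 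You also never address existence of an MME, which is not automatic here since $f$ is not assumed expansive; the paper extracts it from the construction together with $h(X,\delta)=h(X)$, which in turn rests on Proposition~\ref{prop:entropy-appeared} and the expansivity hypothesis.

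This is precisely why the paper (following Bowen) argues asymmetrically rather than proving a Gibbs property for all MMEs. It constructs the one measure $\mu$ above, proves only the \emph{lower} Gibbs bound for it (Lemma~\ref{lem:GibbsG}), proves a two-interval variant (Lemma~\ref{lem:Gibbsish}) in order to establish ergodicity of $\mu$ (Proposition~\ref{prop:reallyweakmix}), and then replaces the upper Gibbs bound you wanted by a counting statement (Lemma~\ref{lem:posmeas}): if $\nu$ is any MME and a set has $\nu$-measure at least $\gamma$, then that set must meet at least $C_\gamma e^{nh(X)}$ atoms of a partition adapted to a maximal $(n,14\epsilon)$-separated set---this is where expansivity at scale $28\epsilon$ enters, since atoms have $d_n$-diameter at most $28\epsilon$ and hence the partitions are generating. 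Given a hypothetical ergodic MME $\nu\perp\mu$, one approximates the separating set by unions $U_n$ of adapted atoms, uses Lemma~\ref{lem:posmeas} to get $\#U_n\geq C_\gamma e^{nh(X)}$, uses Lemma~\ref{lem:lotsinG} (this is where $\hspec(f,\epsilon)<\htop(f)$ acts, matching your ``full-exponential proportion lies in $\GGG$'' intuition) to keep half of these atoms centered in $\GGG^M$, and then applies $\mu$'s lower Gibbs bound atom by atom to get $\mu(U_n)\geq\frac12 C_\gamma K_M>0$, contradicting $\mu(U_n)\to0$. So your instinct that the two hypotheses control the two sides of a Gibbs-type comparison is sound, but the ``upper side'' must be formulated as a counting lemma about partitions for an arbitrary MME, and the ``lower side'' can only be proved for one explicitly constructed measure, whose ergodicity then has to be proved separately---two essential steps that your proposal does not supply.
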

The hypotheses  of Theorem \ref{thm:main} are a priori weaker and potentially easier to check than the scale-free hypotheses of Theorem \ref{thm:main0}. It is unavoidable that we investigate expansivity at a larger scale than specification. This is due to changes of scale that occur in the course of the proof. In our setting, this cannot be eliminated by standard rescaling techniques of the type used by Bowen \cite{rB74}. (See remark at the end of Section~\ref{sec:spec}.)  This accounts for the (not necessarily optimal) factor of $28$ in the statement of Theorem \ref{thm:main}.



Roughly speaking, $\hexp(f, \epsilon)$ and $\hspec(f, \epsilon)$ may be understood as follows.  
Positive expansivity is equivalent to having $\bigcap_{n \geq 0} B_{n}(x,\epsilon) = \{x\}$ for all $x\in X$ and all sufficiently small $\epsilon>0$.  We define $\hexp(f, \epsilon)$ as the supremum of the entropies of ergodic measures giving positive weight to the set of points at which this condition fails.

Specification at scale $\epsilon$ means that there exists a constant $\tau$ such that every finite collection of finite orbit segments $(x_i,\dots,f^{n_i}x_i)$ can be $\epsilon$-shadowed by a single orbit which takes $\tau$ iterates to transition from one segment to the next.  We look for a collection $\SSS$ of orbit segments (obstructions to specification) such that any finite collection of finite orbit segments can be $\epsilon$-shadowed with gap size $\tau$ \emph{provided we are allowed to first remove elements of $\SSS$ from the ends of each segment}. 
We define $\hspec(f,\epsilon)$ to be the infimum of the entropy of such a collection $\SSS$.

Our goal for this note is to establish these techniques and concepts, particularly the use of $\hexp$. As a first novel application of Theorem \ref{thm:main}, we can establish intrinsic ergodicity for non-symbolic factors of $\beta$-shifts, subject to a weak expansivity condition, extending our results from ~\cite{beta-factor}. This includes the following result:
\begin{thma}\label{thm:beta}
Every positively expansive factor of a $\beta$-shift is intrinsically ergodic.
\end{thma}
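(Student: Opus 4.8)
The plan is to deduce Theorem~\ref{thm:beta} from Theorem~\ref{thm:main}. Write the $\beta$-shift as $(X,\sigma)$ and let $\pi\colon X\to Y$ be a factor map onto the positively expansive system $(Y,f)$, with positive expansivity constant $c>0$. Since a factor of a transitive system is transitive, $Y$ is transitive, and I first dispose of the degenerate case: if $\htop(f)=0$ then $(Y,f)$ is a transitive positively expansive system of zero entropy, hence a single periodic orbit, which is trivially intrinsically ergodic. So assume $\htop(f)>0$. It then suffices to exhibit a single scale $\epsilon>0$ for which both $\hexp(f,28\epsilon)<\htop(f)$ and $\hspec(f,\epsilon)<\htop(f)$.

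The expansivity obstruction is immediate. For any $\epsilon$ with $28\epsilon<c$, positive expansivity gives $\bigcap_{n\ge 0}B_n(x,28\epsilon)=\{x\}$ for every $x\in Y$, so the set of non-expansive points at scale $28\epsilon$ is empty. There is then no ergodic measure charging it, and $\hexp(f,28\epsilon)<\htop(f)$ holds. This reduces the whole problem to controlling $\hspec(f,\epsilon)$, and the heart of the argument is to transport the specification structure of the $\beta$-shift down through $\pi$.

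For this I would use that $(X,\sigma)$ carries a collection of orbit segments with specification whose complementary prefix/suffix obstructions $\CCC^p,\CCC^s$ form a \emph{zero-entropy} collection; that is, $\hspec(\sigma,\delta)=0$ for every small $\delta>0$, as established by the combinatorics of $\beta$-expansions in~\cite{beta-factor}. Fix $\epsilon<c/28$ and use uniform continuity of $\pi$ to choose $\delta>0$ so that points which are $(n,\delta)$-close in $X$ have $(n,\epsilon)$-close images in $Y$. Given any finite family of orbit segments in $Y$, lift each to an orbit segment in $X$, apply the $\beta$-shift specification after deleting a prefix in $\CCC^p$ and a suffix in $\CCC^s$ from each end, shadow at scale $\delta$ in $X$ with a uniform gap $\tau$, and project: the image orbit $\epsilon$-shadows the original $Y$-segments with the same gap $\tau$. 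Thus $\SSS_Y:=\pi(\CCC^p)\cup\pi(\CCC^s)$ is an admissible obstruction collection for $f$ at scale $\epsilon$. Finally, the contrapositive of the Bowen-ball estimate shows that any $(n,\epsilon)$-separated subset of $\SSS_Y$ pulls back to an $(n,\delta)$-separated subset of $\CCC^p\cup\CCC^s$, whence the entropy of $\SSS_Y$ is at most that of $\CCC^p\cup\CCC^s$, giving $\hspec(f,\epsilon)\le 0<\htop(f)$. Applying Theorem~\ref{thm:main} then yields intrinsic ergodicity.

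The main obstacle is the last step of the preceding paragraph: the factor map is only uniformly continuous and far from injective, so I must ensure that (i) the change of scale $\delta\mapsto\epsilon$ is compatible with the fixed factor $28$ required for the expansivity hypothesis, and (ii) the entropy of the projected obstruction collection does not increase under $\pi$ — here the fact that the $\beta$-shift obstructions have entropy exactly $0$, rather than merely below $\htop(\sigma)$, is what makes the bound robust against any possible drop in entropy from $X$ to $Y$. Verifying that the prefix/suffix deletions in $X$ project to genuine end-deletions of $Y$-orbit segments, and that the transition gap $\tau$ is preserved under projection, are the remaining technical points.
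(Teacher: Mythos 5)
Your positive-entropy branch is essentially the paper's own argument: you transport the $\beta$-shift decomposition $\LLL = \tilde\GGG\,\tilde\CCC^s$ through $\pi$, verify that the projected good collection retains specification at every scale (via uniform continuity and a cylinder-length correction $m(\delta)$, which is why the transition gap becomes $m+\tau$ rather than $\tau$ -- a harmless point that you flag), and bound the entropy of the projected obstructions by lifting separated sets. The paper's Lemma~\ref{lem:factor-ent} does this last step even more directly, by observing that every suffix-obstruction point in $Y$ shadows the single orbit of $\pi(w)$, but your pullback-of-separated-sets argument is equally valid. Your treatment of expansivity also matches the paper: positive expansivity makes $\NE(28\epsilon)$ empty once $28\epsilon$ is below the expansivity constant, so no ergodic measure charges it.

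The genuine gap is in your degenerate case. The claim that a transitive, positively expansive system with zero topological entropy must be a single periodic orbit is false: every one-sided subshift is positively expansive, and the one-sided Sturmian subshift is minimal (hence transitive), infinite, and has zero entropy. Worse, the conclusion you actually need -- intrinsic ergodicity -- can genuinely fail in this class: the orbit closure of $0^1 1^1 0^2 1^2 0^3 1^3\cdots$ is a transitive, zero-entropy one-sided subshift containing both fixed points $0^\infty$ and $1^\infty$, and since in a zero-entropy system \emph{every} invariant measure is a measure of maximal entropy, this system has at least two MMEs. So transitivity plus positive expansivity plus zero entropy buys you nothing, and your reduction to the case $\htop(f)>0$ is unjustified as written. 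What is needed -- and what the paper proves as a separate lemma using the combinatorics of $\tilde\GGG$ (any word can be extended into $\tilde\GGG$ by appending $0$'s, and words of $\tilde\GGG$ concatenate freely with anything in $\LLL$) -- is that a factor of a $\beta$-shift is either a single point or has positive topological entropy: from two words $v_1, v_2 \in \tilde\GGG$ of common length $n$ whose cylinders map into disjoint balls, one produces, for every $m$, a family of $2^m$ points of $Y$ that is $(nm,\delta_0)$-separated. With that lemma your case split becomes legitimate, since zero entropy then forces $Y$ to be a point; without it, your zero-entropy case rests on a false statement.
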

This result suggests that arbitrary $\beta$-shifts may have value as coding spaces. The coding of (algebraic) dynamical systems by $\beta$-shifts has been well studied in the special case that $\beta$ is a Pisot number, and hence the corresponding  $\beta$-shift is sofic (see \cite{kS00, nS03, LS} for an extensive list of references).





\section{Definitions}

Let $X$ be a compact metric space and $f\colon X\to X$ a continuous map. We recall some standard definitions, and introduce the definitions of $\hspec$ and $\hexp$.  The notions that follow depend on the dynamics $f$, although our notation suppresses this dependence in order to simplify the presentation. 

\subsection{Entropy}

\begin{definition}
A set $E\subset X$ is \emph{$(n,\eps)$-separated} for some $n\in \NN$ and $\eps>0$ if for all $x\neq y\in E$ we have $y\notin B_n(x,\eps)$, where
\[
B_n(x,\eps) = \{ y\in X \mid d_n(x, y) < \eps \}
\]
and $d_n(x,y) = \max \{ d(f^ix, f^iy) \mid 0 \leq i \leq n-1\}$.
Given $Z\subset X$, we write $\Lamb(Z,n,\eps)$ for the maximum cardinality of a $(n,\eps)$-separated set $E\subset Z$.  
Let
\begin{equation}\label{eqn:uh}
\uh(Z,\eps) := \ulim_{n\to\infty} \frac 1n \log \Lamb(Z,n,\eps), \qquad \uh(Z) = \lim_{\eps\to 0} \uh(Z,\eps),
\end{equation}
and define $\lh$ analogously.  If $\lh(Z,\eps) = \uh(Z,\eps)$ then we write  $h(Z,\eps)$ for the common value  and call this the \emph{topological entropy} of $Z$ at scale $\eps$.
\end{definition}

\begin{remark}
In fact, $\lh$ and $\uh$ are the \emph{lower and upper capacity topological entropies}~\cite{yP97}. If $Z$ is invariant then $\lh(Z,\eps)=\uh(Z,\eps)$ for all $\eps>0$~\cite[Theorem 11.5]{yP97}. In particular, $h(X,\eps)$ exists for every $\eps$, and $h(X)$ is the standard definition of topological entropy for the system $(X,f)$.  For unity of notation, from now on we will usually write $h(X)$ in place of $\htop(f)$.
\end{remark}

The variational principle states that $h(X) = \sup_\mu h_\mu(f)$, where $h_\mu(f)$ is the measure-theoretic entropy of $\mu$ and the supremum is taken over all Borel $f$-invariant probability measures on $X$~\cite{pW82}.  A measure achieving the supremum is a \emph{measure of maximal entropy} (MME), and a system with a unique MME is called \emph{intrinsically ergodic} \cite{beta-factor, bW70, fH79, fH81, jB97}. 
 
We will have occasion to consider the entropy of a \emph{sequence} of sets $Z_n$. 
Such a sequence generates a subset $\DDD \subset X\times \NN$ in a natural way, as $\DDD = \bigcup_n Z_n\times \{n\}$.  Conversely, given $\DDD\subset X\times \NN$ we obtain a sequence of sets $\DDD_n = \{x\in X \mid (x,n)\in \DDD\}$.  We think of subsets $\DDD\subset X\times \NN$ as collections of points and times, so that given $x\in X$, the set $\{n\in \NN \mid (x,n)\in \DDD\}$ can be thought of as those times at which the orbit segment $(x, f(x), \dots, f^n(x))$ satisfies a certain property.

\begin{definition}
Give $\DDD\subset X\times \NN$, write $\Lamb(\DDD,n,\eps) = \Lamb(\DDD_n,n,\eps)$.  The (upper) topological entropy of $\DDD$ is
\begin{equation}\label{eqn:uh2}
\uh(\DDD,\eps) := \ulim_{n\to\infty} \frac 1n \log \Lamb(\DDD,n,\eps), \qquad \uh(\DDD) = \lim_{\eps\to 0} \uh(\DDD,\eps).
\end{equation}
\end{definition}
This procedure of taking the capacity entropy of a sequence of sets appears in the definition of coarse multifractal spectra, and a general discussion of the relationship between this quantity and other dimensional quantities is given in~\cite{pressure-spectrum}.

\subsection{Expansivity} \label{expansive}

Positive expansivity at scale $\eps$ is equivalent to the following property: for every $x\in X$, we have
\begin{equation}\label{eqn:Phi}
\Phi^+_x(\eps) := \bigcap_{n\geq 0} B_n(x,\eps) = \{x\}.
\end{equation}
\begin{definition}
Denote by $\NE(\eps) := \{x\in X \mid \Phi^+_x(\eps) \neq \{x\}\}$ the set of non-expansive points at scale $\eps$.  Following Buzzi and Fisher~\cite{BF11}, we say that an $f$-invariant measure $\mu$ is \emph{almost positively expansive at scale $\eps$} if $\mu(\NE(\eps))=0$; in other words, if $\Phi^+_x(\eps) = \{x\}$ for $\mu$-a.e.\ $x$.  
\end{definition}
\begin{definition}\label{def:hexp}
Writing $\Mfe$ for the set of ergodic $f$-invariant Borel probability measures on $X$, the \emph{entropy of obstructions to positive expansivity at scale $\eps$ } is
\begin{align*}
\hexp(f, \eps) &= \sup \{h_\mu(f) \mid \mu\in\Mfe \text{ is not almost positively expansive} \} \\
&= \sup \{h_\mu(f) \mid \mu\in\Mfe \text{ and } \mu(\NE(\eps))>0 \}.
\end{align*}
We also define $\hexp(f)= \lim_{\eps \to 0} \hexp(f, \eps) $. The limit exists since $\hexp(f, \eps)$ is monotonic when considered as a function of $\eps$.
\end{definition}
\begin{remark}
Positive expansivity is the appropriate definition for non-invertible maps.  For invertible maps, Buzzi and Fisher define almost expansivity using $\Phi_x(\eps) := \bigcap_{n\geq 0} f^n(B_{2n}(f^{-n}x,\eps))$ and requiring that $\mu$ give zero measure to the non-expansive set $\mathcal{N}(\eps) := \{x\in X \mid \Phi_x(\eps) \neq \{x\}\}$. Thus, when $f$ is an expansive homeomorphism on $X$, every $f$-invariant measure is almost-expansive.   We can define the \emph{entropy of obstructions to expansivity at scale $\eps$} for a homeomorphism $f$ as
\begin{align*}
h_{\mathrm{exp}}^\perp (\eps) &= \sup \{h_\mu(f) \mid \mu \text{ is $f$-invariant and not almost expansive} \} \\
&= \sup \{h_\mu(f) \mid \mu \text{ is $f$-invariant and } \mu(\mathcal{N}(\eps))>0 \}.
\end{align*}
We do not consider the invertible case or $h_{\mathrm{exp}}^\perp$ in this note. This is purely for convenience. Our proofs go through with minimal modification in the situation when $f$ is invertible and $h_{\mathrm{exp}}$ is less than the entropy of the space.
\end{remark}

The following proposition gives a useful method for bounding $\hexp$.

\begin{proposition}\label{prop:hexpleq}
$\hexp(f,\eps) \leq \lh(\NE(\eps))$.
\end{proposition}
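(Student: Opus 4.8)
The plan is to reduce the statement to a bound on individual measures and then count separated sets against the Brin--Katok local entropy formula. By the definition of $\hexp(f,\eps)$ as a supremum, it suffices to fix an ergodic $\mu\in\Mfe$ with $\mu(\NE(\eps))>0$ and prove $h_\mu(f)\le\lh(\NE(\eps))$; write $Z=\NE(\eps)$. I would stress at the outset that I make no attempt to show $Z$ is invariant: forward invariance of $Z$ can fail at points where $f$ is locally non-injective, so the usual shortcut ``$\mu(Z)>0\Rightarrow\mu(Z)=1$ by ergodicity'' is unavailable. The argument below uses only that $Z$ carries positive measure.

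The key input is the Brin--Katok local entropy formula: for $\mu$-a.e.\ $x$,
\[
\lim_{\delta\to0}\liminf_{n\to\infty}-\tfrac1n\log\mu(B_n(x,\delta)) = h_\mu(f).
\]
Fix $\eta>0$. Using this formula I would choose $\delta>0$ so small that for $\mu$-a.e.\ $x$ there is a threshold $N(x)<\infty$ with $\mu(B_n(x,2\delta))\le e^{-n(h_\mu(f)-\eta)}$ for all $n\ge N(x)$. Since $N(x)$ is finite almost everywhere, I can fix a single $N$ and a set $G$ with $\mu(G)>1-\tfrac12\mu(Z)$ on which this estimate holds uniformly for $n\ge N$.

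Next I would count. Let $E_n\subseteq Z$ be an $(n,\delta)$-separated set of maximal cardinality, so $|E_n|=\Lamb(Z,n,\delta)$ and, by maximality, the balls $\{B_n(x,\delta):x\in E_n\}$ cover $Z$, hence in particular cover $G\cap Z$. Since $\mu(G\cap Z)\ge\tfrac12\mu(Z)$,
\[
\tfrac12\mu(Z)\le\mu(G\cap Z)\le\sum_{x\in E_n}\mu\big(B_n(x,\delta)\cap G\big).
\]
For each $x\in E_n$ whose ball meets $G$, picking $x'\in B_n(x,\delta)\cap G$ gives $B_n(x,\delta)\subseteq B_n(x',2\delta)$ by the triangle inequality for $d_n$, whence $\mu(B_n(x,\delta))\le e^{-n(h_\mu(f)-\eta)}$ for $n\ge N$, while balls missing $G$ contribute nothing. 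This yields $\tfrac12\mu(Z)\le\Lamb(Z,n,\delta)\,e^{-n(h_\mu(f)-\eta)}$, so $\lh(Z,\delta)=\liminf_n\frac1n\log\Lamb(Z,n,\delta)\ge h_\mu(f)-\eta$. As $\lh(Z,\delta)$ increases when $\delta\downarrow0$, this gives $\lh(Z)\ge h_\mu(f)-\eta$, and letting $\eta\to0$ yields $h_\mu(f)\le\lh(Z)$.

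The one genuinely delicate point -- the step I expect to be the main obstacle -- is that the centres of a separated set lying in $Z$ need not be $\mu$-generic, so the Brin--Katok estimate cannot be applied to them directly. The passage from $B_n(x,\delta)$ to $B_n(x',2\delta)$ for a nearby generic $x'\in G$ is exactly what repairs this, at the harmless cost of doubling the scale and making the estimate uniform over a large-measure set $G$. Everything else is bookkeeping: assembling the per-measure bound into $\hexp(f,\eps)=\sup_\mu h_\mu(f)\le\lh(\NE(\eps))$, with the degenerate case $\NE(\eps)=\emptyset$ (no admissible $\mu$) holding trivially.
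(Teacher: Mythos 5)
Your proof is correct and takes essentially the same route as the paper: fix an ergodic $\mu\in\Mfe$ with $\mu(\NE(\eps))>0$ and $h_\mu(f)$ close to $\hexp(f,\eps)$, use Brin--Katok to obtain a uniform bound $\mu(B_n(x,\cdot))\le e^{-n(h_\mu(f)-\eta)}$ on a set of large measure, and count a maximal separated subset of $\NE(\eps)$ against that bound to force $\lh(\NE(\eps))\ge h_\mu(f)-\eta$. The only differences are cosmetic: the paper takes its separated set inside $Y\cap\NE(\eps)$, where $Y$ is the Brin--Katok good set, so the centres are themselves generic and no scale-doubling trick is needed, whereas you keep the separated set in all of $\NE(\eps)$ and transfer the estimate to nearby points of $G$ at twice the scale; moreover your insistence on working at a small auxiliary scale $\delta$ (rather than at $\eps$ itself, as the paper writes) is, if anything, the more careful reading, since Brin--Katok controls Bowen balls only at sufficiently small scales.
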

\begin{proof}
Given $h<\hexp(f,\eps)$, let $\mu\in\Mfe$ be such that $h_\mu(f) > h$ and $\mu(\NE(\eps))>0$.  By the Brin--Katok local entropy formula~\cite{BK83}, there exists $N\in \NN$ and $Y\subset X$ such that $\mu(Y) > 1 - \mu(\NE(\eps))$ and $\mu(B_n(x,\eps)) \leq e^{-nh}$ for all $n\geq N$.  Taking $Z = Y \cap \NE(\eps)$, we get $\mu(Z)>0$.

Let $E_n\subset Z$ be any maximal $(n,\eps)$-separated set.  By maximality, we have $Z\subset \bigcup_{x\in E_n} B_n(x,\eps)$, whence $\mu(Z) \leq (\#E_n) e^{-nh}$.  Thus $\#E_n \geq \mu(Z) e^{nh}$, and it follows that $\lh(\NE(\eps)) \geq \lh(Z,\eps) \geq h$.  Since $h<\hexp(f,\eps)$ was arbitrary, we are done. 
\end{proof}

The following two propositions are crucial to our approach.

\begin{proposition}\label{prop:generating}
If $\mu$ is almost positively expansive at scale $\eps$ and $\AAA$ is a measurable partition of $X$ such that every element of $\AAA$ is contained in $B(x,\eps)$ for some $x\in X$, then $\AAA$ is (one-sided) generating for $\mu$.
\end{proposition}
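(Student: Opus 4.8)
The plan is to reduce the statement to a property of atoms and then apply almost positive expansivity. Recall that $\AAA$ is one-sided generating for $\mu$ precisely when $\hat{\AAA} := \bigvee_{n=0}^{\infty} f^{-n}\AAA$ generates the Borel $\sigma$-algebra $\mathcal{B}$ modulo $\mu$-null sets. Since $X$ is a compact metric space and $\mu$ a Borel probability measure, $(X,\mathcal{B},\mu)$ is a Lebesgue space, so I can use the standard correspondence from Rokhlin's theory of measurable partitions: for a (finite or countable, hence countably generated) partition, the sub-$\sigma$-algebra generated by $\hat{\AAA}$ coincides with $\mathcal{B}$ mod $\mu$ exactly when the measurable partition $\hat{\AAA}$ is the partition into points mod $\mu$. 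Thus it suffices to prove that the atoms of $\hat{\AAA}$ are singletons for $\mu$-almost every point.

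I would first describe these atoms. Writing $\AAA(z)$ for the element of $\AAA$ containing $z$, the atom of $\hat{\AAA}$ through $x$ is
\[
[x] = \{ y \in X : f^n y \in \AAA(f^n x) \text{ for all } n \geq 0 \}.
\]
Here the hypothesis on $\AAA$ supplies the geometry: if $y \in [x]$, then for every $n$ the points $f^n x$ and $f^n y$ lie in a common element of $\AAA$, which is contained in some ball $B(z_n,\eps)$, so the triangle inequality yields $d(f^n x, f^n y) < 2\eps$ for all $n \geq 0$. That is, $[x] \subseteq \Phi^+_x(2\eps)$ for every $x \in X$.

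Almost positive expansivity now collapses the atoms. It gives $\Phi^+_x = \{x\}$ for $\mu$-a.e.\ $x$ at the expansivity scale; matching that scale to the $2\eps$ produced above, the containment $[x] \subseteq \Phi^+_x(2\eps)$ forces $[x] = \{x\}$ off a $\mu$-null set, and the reduction of the first paragraph then gives $\hat{\AAA} = \mathcal{B}$ mod $\mu$. The factor of two between the ball radius and the expansivity scale is exactly the kind of scale change the authors flag as unavoidable; in practice one takes the elements of $\AAA$ fine enough (or reads $\NE$ at the matching scale) so that the given hypothesis $\mu(\NE(\eps))=0$ applies.

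The step I expect to be the main obstacle is the measure-theoretic passage from ``the atoms of $\hat{\AAA}$ are singletons mod $\mu$'' to ``$\hat{\AAA} = \mathcal{B}$ mod $\mu$,'' which is genuinely not formal: it rests on the standard (Lebesgue) structure of $(X,\mathcal{B},\mu)$ and on handling the exceptional null set where expansivity fails so that it does not obstruct generation. One concrete route is through the martingale convergence theorem: for each Borel set $B$ the conditional expectations $\mathbb{E}[\mathbf{1}_B \mid \bigvee_{n=0}^{m} f^{-n}\AAA]$ converge $\mu$-a.e.\ as $m \to \infty$ to $\mathbb{E}[\mathbf{1}_B \mid \hat{\AAA}]$, and the singleton-atom property (with standardness) is precisely what identifies this limit with $\mathbf{1}_B$, yielding the desired approximation of $B$ by $\hat{\AAA}$-measurable sets. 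By contrast, the geometric half—that shared $\AAA$-itineraries keep two orbits within a fixed scale—is immediate from the triangle inequality and presents no difficulty.
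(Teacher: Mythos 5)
Your argument is the right one, and it is essentially what the paper intends: the paper offers no proof at all, declaring the proposition ``immediate from the definitions,'' and your elaboration --- the atom of $\hat{\AAA}=\bigvee_{n\geq 0}f^{-n}\AAA$ through $x$ is contained in $\Phi^+_x(2\eps)$ by the triangle inequality, almost positive expansivity collapses these atoms to singletons off a null set, and Lebesgue-space/Rokhlin theory (equivalently your martingale argument, via the disintegration of $\mu$ over the atoms of $\hat{\AAA}$) converts ``atoms are singletons mod $\mu$'' into ``$\hat{\AAA}=\mathcal{B}$ mod $\mu$'' --- is exactly the filling-in of that claim. All three steps are correct as you state them.

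The one substantive point is the factor of $2$, and there you should be more assertive: it is not a looseness in your proof but a defect in the statement as written. What your argument proves is the proposition with the scales matched, i.e.\ atoms contained in $\eps$-balls together with almost positive expansivity at scale $2\eps$ (equivalently, atoms in $(\eps/2)$-balls, or of diameter less than $\eps$, with expansivity at scale $\eps$). The literal statement, with both scales equal to $\eps$, is false. For instance, take $f(x)=2x \pmod 1$ on the circle and $\eps\in(1/6,1/4]$; then $f$ is positively expansive at scale $\eps$, so every invariant measure is almost positively expansive at scale $\eps$. The period-$4$ orbits of $1/5$ and of $8/15=1/5+1/3$ pair off: $d(f^n(1/5),f^n(8/15))=1/3$ for all $n$, and the four pairs $P_n=\{f^n(1/5),f^n(8/15)\}$, $0\leq n\leq 3$, are disjoint two-point sets, each contained in a ball of radius $\eps>1/6$ centred at the midpoint of the short arc joining its two points. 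Let $\AAA$ consist of the four $P_n$ together with a countable partition of the complement into small arcs, and let $\mu$ be the uniform measure on these eight points. Every hypothesis of the proposition then holds at scale $\eps$, yet $f(P_n)\subset P_{n+1 \bmod 4}$, so $\hat{\AAA}$ never separates $f^n(1/5)$ from $f^n(8/15)$, and $\AAA$ is not generating for $\mu$. This factor of $2$ is harmless for the paper's main results --- Lemma~\ref{lem:posmeas} already respects it, using partitions adapted to $(n,14\eps)$-separated sets together with expansivity at scale $28\eps$ --- but the application inside Proposition~\ref{prop:entropy-appeared} uses matching scales, so its conclusion should be read as ``if $\hexp(f,2\eps)<h(X)$ then $h(X,\eps)=h(X)$,'' which only shifts the (admittedly non-optimal) constants downstream in Theorem~\ref{thm:main}. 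In short: your proof is correct for the corrected statement, and your instinct to ``read $\NE$ at the matching scale'' is precisely the required repair; the only improvement would be to say plainly that this repair is necessary rather than optional.
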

\begin{proposition}\label{prop:entropy-appeared}
If $\hexp(f,\eps)< h(X)$, then $h(X,\eps) = h(X)$.
\end{proposition}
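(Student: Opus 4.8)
The plan is to prove the two inequalities separately. The bound $h(X,\eps)\le h(X)$ is immediate: since $X$ is invariant we have $\lh(X,\eps)=\uh(X,\eps)$, so $h(X,\eps)$ is defined and equals $\uh(X,\eps)$, and by \eqref{eqn:uh} this increases to $\uh(X)=h(X)$ as $\eps\to 0$. Everything therefore reduces to showing $h(X,\eps)=\uh(X,\eps)\ge h(X)$, which I would extract from the following lemma applied to measures of nearly maximal entropy: \emph{if $\mu\in\Mfe$ is almost positively expansive at scale $\eps$, then $h_\mu(f)\le\uh(X,\eps)$.} Granting the lemma, fix any $c$ with $\hexp(f,\eps)<c<h(X)$, which is possible by hypothesis. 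By the variational principle in its ergodic form there is $\mu\in\Mfe$ with $h_\mu(f)>c$, and since $c>\hexp(f,\eps)$ Definition~\ref{def:hexp} forces $\mu$ to be almost positively expansive at scale $\eps$ (otherwise $h_\mu(f)\le\hexp(f,\eps)<c$). The lemma then gives $\uh(X,\eps)\ge h_\mu(f)>c$, and letting $c\uparrow h(X)$ yields $\uh(X,\eps)\ge h(X)$.

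For the lemma I would first invoke Proposition~\ref{prop:generating}: choosing a measurable partition $\AAA$ whose elements have diameter less than $\eps$ (so each lies in some $B(x,\eps)$), almost positive expansivity makes $\AAA$ generating, whence $h_\mu(f)=h_\mu(f,\AAA)$. The Shannon--McMillan--Breiman theorem then gives $-\tfrac1n\log\mu(\AAA_0^{n-1}(x))\to h_\mu(f)$ for $\mu$-a.e.\ $x$, where $\AAA_0^{n-1}=\bigvee_{i=0}^{n-1}f^{-i}\AAA$ and $\AAA_0^{n-1}(x)$ denotes the atom containing $x$. Fixing $h'<h_\mu(f)$, for large $n$ a set of measure close to $1$ is then covered by atoms of measure at most $e^{-nh'}$. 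The goal is to conclude that $\Lamb(X,n,\eps)\ge e^{nh'-o(n)}$ and hence $\uh(X,\eps)\ge h'$; letting $h'\uparrow h_\mu(f)$ would finish the proof.

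The main obstacle is a change of scale, and I expect it to be the crux. Although each atom of $\AAA_0^{n-1}$ is $d_n$-small (diameter $<\eps$), distinct atoms can be arbitrarily close, so representatives of distinct atoms need not be $(n,\eps)$-separated and one cannot directly convert the atom count into an $(n,\eps)$-separated set. This is exactly where almost positive expansivity must be used a second time. Since $\mu$ is almost positively expansive, $\operatorname{diam} B_m(x,\eps)\to 0$ as $m\to\infty$ for $\mu$-a.e.\ $x$: any cluster point of far-away shadowing points would lie in $\Phi^+_x(\eps)\setminus\{x\}$, contradicting \eqref{eqn:Phi}. By Egorov's theorem there are a set $K$ with $\mu(K)$ close to $1$ and an $M\in\NN$ such that $\operatorname{diam} B_M(x,\eps)<\rho$ for every $x\in K$, where $\rho$ is chosen so small that any $\rho$-ball meets at most a bounded number $L$ of elements of $\AAA$. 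Combining this with Birkhoff's theorem for $\mathbf 1_K$, on a set of $x$ of measure close to $1$ the orbit of any $y\in B_n(x,\eps)$ lies in the same element of $\AAA$ as the orbit of $x$ at all but a small fraction of the times $0\le i<n$. Hence a single Bowen ball $B_n(x,\eps)$, intersected with this good set, meets only $e^{o(n)}$ atoms of $\AAA_0^{n-1}$ (choosing the small set of disagreement positions and the $\le L$ symbols there contributes only a subexponential binomial factor), each of measure at most $e^{-nh'}$, so that $\mu\bigl(B_n(x,\eps)\cap(\text{good set})\bigr)\le e^{o(n)}e^{-nh'}$.

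Finally I would take $E$ to be a maximal $(n,\eps)$-separated subset of the good set; by maximality its Bowen balls cover the good set, so $\mu(\text{good set})\le\sum_{x\in E}\mu\bigl(B_n(x,\eps)\cap(\text{good set})\bigr)\le(\#E)\,e^{o(n)}e^{-nh'}$. Since the good set has measure bounded away from $0$, this gives $\Lamb(X,n,\eps)\ge\#E\ge e^{nh'-o(n)}$, whence $\uh(X,\eps)\ge h'$ after letting the defect from the measures of $K$ and of the Birkhoff/SMB good sets tend to $0$, and then $h'\uparrow h_\mu(f)$. The only delicate points are the standard open/closed-ball adjustments in the $\operatorname{diam} B_M(x,\eps)\to 0$ step and the bookkeeping of the subexponential factor; crucially, none of these alters the scale $\eps$, which is preserved throughout, so the conclusion holds at exactly the scale asserted.
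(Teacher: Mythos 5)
Your global reduction---via the variational principle, any ergodic $\mu$ with $h_\mu(f)>\hexp(f,\eps)$ is forced to be almost positively expansive at scale $\eps$, so it suffices to prove $h_\mu(f)\le \uh(X,\eps)$ for such $\mu$---is exactly the paper's, and your use of Proposition~\ref{prop:generating} with a diameter-$<\eps$ partition to get $h_\mu(f)=h_\mu(f,\AAA)$ is sound. The genuine gap is in the step converting SMB atom counts into $(n,\eps)$-separated points. From your Egorov/Birkhoff setup you may legitimately conclude that for $y\in B_n(x,\eps)$ and all but a $\delta$-fraction of times $i<n$ one has $d(f^iy,f^ix)<\rho$; but you then assert that at these times $f^iy$ lies in the \emph{same} element of $\AAA$ as $f^ix$, and this does not follow. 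Proximity to $f^ix$ only places $f^iy$ in one of the at most $L$ atoms meeting $B(f^ix,\rho)$; it lies in the same atom only if $f^ix$ sits at distance $>\rho$ from the boundary of its atom, and nothing in your construction controls how often the orbit of $x$ visits the $\rho$-neighbourhood of $\partial\AAA$. With what you actually established, the number of atoms of $\AAA_0^{n-1}$ met by $B_n(x,\eps)\cap(\text{good set})$ is bounded only by $L^{(1-\delta)n}(\#\AAA)^{\delta n}$ (the uncontrolled positions are determined by $x$, so the binomial factor does not enter, and $L\ge 2$ necessarily, since a ball centred near an atom boundary meets at least two atoms; the only bound valid for an arbitrary partition is $L=\#\AAA$). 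This exponential factor destroys the estimate $\mu(B_n(x,\eps)\cap\text{good})\le e^{o(n)}e^{-nh'}$: it is precisely the $2^n$-type loss occurring in Misiurewicz's proof of the variational principle, harmless there because the scale is sent to $0$, fatal here because the scale is fixed. The missing idea needed to repair this is to choose $\AAA$ with $\mu(\partial\AAA)=0$ and to add to the Birkhoff good set the requirement that $B(f^ix,\rho)\subset\AAA(f^ix)$ at most times $i$; then ``same atom'' is forced and your count becomes $\binom{n}{\delta n}(\#\AAA)^{\delta n}=e^{o(n)}$ as desired.

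Even with that repair, there is a secondary scale loss which you mention but underestimate: almost positive expansivity at scale $\eps$ is a statement about open Bowen balls, and the nested-compact-sets argument gives $\diam B_M(x,\eps')\to 0$ only for $\eps'<\eps$, since $\bigcap_m\overline{B_m(x,\eps)}$ may strictly contain $\{x\}$ even when $\Phi^+_x(\eps)=\{x\}$. Running your covering argument at scale $\eps'$ then lower-bounds $\Lamb(X,n,\eps')$ rather than $\Lamb(X,n,\eps)$, yielding $h(X,\eps')=h(X)$ only for $\eps'<\eps$, which is strictly weaker than the claimed equality at scale $\eps$ itself. The paper avoids both problems with a much shorter device: it takes the partition $\AAA^n$ adapted to a maximal $(n,\eps)$-separated set $E_n$, observes that the non-expansive set of $f^n$ with respect to the metric $d_n$ coincides with that of $f$ with respect to $d$, so that Proposition~\ref{prop:generating} applied to $(f^n,d_n)$ makes $\AAA^n$ generating for $\mu$ under $f^n$, and then applies Kolmogorov--Sinai: $h_\mu(f)=\frac1n h_\mu(f^n,\AAA^n)\le\frac1n\log\#E_n$, with $n\to\infty$. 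There is no SMB theorem and no atom-to-separated-set conversion at all, because $\#\AAA^n=\Lamb(X,n,\eps)$ by construction; the counting your argument struggles with is built into the choice of partition.
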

Proposition \ref{prop:generating} is immediate from the definitions. Before proving Proposition \ref{prop:entropy-appeared}, we introduce a useful concept.

\begin{definition}
Let $E_n\subset X$ be an $(n, \eps)$-separated set of maximum cardinality. We say that a partition $\AAA^n$ is adapted to $E_n$ if $\AAA^n$ is a partition such that each atom $w$ of the partition satisfies
\[
B_n \left(x, \frac\eps 2\right) \subset w \subset B_n (x, \eps)
\]
for some $x \in E_n$.
\end{definition}

The existence of adapted partitions for any $E_n$ follows from the fact that the sets $B_n(x, \eps/2)$ are disjoint and the sets $B_n(x,\eps)$ cover $X$.  For every $n\geq 1$, the non-expansive set for the map $f^n$ with respect to the Bowen metric $d_n = \max_{0\leq k < n} d\circ f^k$ coincides with the non-expansive set for $f$ with respect to the original metric $d$.  Thus, Proposition~\ref{prop:generating} shows that if $\mu$ is almost positively expansive at scale $\eps$ and $\AAA^n$ is an adapted partition for a maximal $(n,\eps)$-separated set, then $\AAA^n$ is generating for $\mu$ under the map $f^n$.

\begin{proof} [Proof of Proposition \ref{prop:entropy-appeared}]
For each $n\geq 1$, let $E_n$ be a maximal $(n,\eps)$-separated set, and $\AAA^n$ an adapted partition for $E_n$.  Fix $\mu\in \Mfe$ with $h_\mu(f) > \hexp(f,\eps)$, noting that such $\mu$ exist by the variational principle. 
By the above remarks, $\AAA^n$ is generating for $\mu$ under the map $f^n$.  In particular, we have
\[
h_\mu(f) = \frac 1n h_\mu(f^n) = \frac 1n h_\mu(f^n,\AAA^n) \leq \frac 1n \log \#\AAA^n = \frac 1n \log \# E_n,
\]
and sending $n\to\infty$ gives $h_\mu(f) \leq h(X,\eps)$.  Taking a supremum over all such $\mu$ and applying the variational principle gives the result.
\end{proof}

We will see in $\S\ref{sec:Gibbs}$ that Proposition \ref{prop:entropy-appeared} implies the existence of an MME.

\subsection{Specification}\label{sec:spec}


\begin{definition}\label{def:spec}
A collection of points and times $\GGG \subset X\times \NN$ has \emph{specification at scale $\eps$} if there exists $\tau\in \NN$ such that for every $\{(x_j,n_j)\mid 0\leq j\leq k\} \subset \GGG$,
\begin{equation}\label{eqn:spec}
\bigcap_{j=0}^{k} f^{-\sum_{i=0}^{j-1} (n_i +\tau)} B_{n_j}(x_j, \eps) \neq \emptyset.
\end{equation}
If the intersection always contains a periodic point with period $\sum_{i=0}^k (n_i + \tau)$, we say that $\GGG$ has \emph{(Per)-specification at scale $\eps$}.
\end{definition}


\begin{remark}
There are two standard definitions of specification for the dynamical system $(X,f)$, which differ from each other only in whether or not one requires that the shadowing orbit be periodic.  Using our terminology, these properties are equivalent to the requirement that the entire set $X\times \NN$ has specification, or (Per)-specification, for every $\eps>0$.
\end{remark}


\begin{definition}\label{def:decomp}
A triple $(\PPP,\GGG,\SSS)\subset (X\times \NN)^3$ is a \emph{decomposition} for $(X,f)$ if for every $x\in X$ and $n \in \NN$ there exist $p, g, s \in \NN$ with $p+g+s=n$ so that
\[
(x,p)\in \PPP, \qquad (f^px,g)\in \GGG, \qquad (f^{p+g}x,s)\in \SSS.
\]
That is, every orbit segment can be decomposed into a `prefix' from $\PPP$, a `good' core from $\GGG$, and a `suffix' from $\SSS$.  Given a decomposition $(\PPP,\GGG,\SSS)$, we will also need to consider the following collections of points and times:
\[
\GGG^M := \{ (x,n) \in X\times \NN \mid p(x,n) \leq M, s(x,n) \leq M\}.
\]
\end{definition}

\begin{remark}
Note that $X\times \NN = \bigcup_M \GGG^M$, so that a decomposition $(\PPP,\GGG,\SSS)$ defines a filtration of the collection of all points and times.  We are interested in the situation when each level $\GGG^M$ of this filtration has specification.  In the course of the proof we will see that every MME $\mu$ has the property that $\inf_n \mu(\GGG_n^M) \to 1$ as $M\to\infty$. This suggests that the collections $\GGG^M$ can be thought of as regular sets analogous to the Pesin sets of non-uniform hyperbolicity theory.
\end{remark}

\begin{definition}\label{def:obstructions}
The \emph{entropy of obstructions to specification at scale $\eps$} is
\begin{multline*}
\hspec(f,\eps) := \inf \{ \uh(\PPP \cup \SSS,3\eps) \mid \PPP,\SSS \subset X\times \NN \text{ are such that }\\
\text{there exists a decomposition $(\PPP,\GGG,\SSS)$ for $(X,f)$} \\
\text{for which every $\GGG^M$ has specification at scale $\eps$} \}.
\end{multline*}
We define $\hspec(f, \eps)= \lim_{\eps \to 0} \hspec(f, \eps) $. The limit exists since $\hspec(f, \eps)$ is monotonic when considered as a function of $\eps$.
\end{definition}

\begin{remark}
A priori, $\GGG$ could have specification at scale $\epsilon$, but not at a smaller scale $\delta$.  Thus, $\hspec(f,\epsilon)$ may increase as $\epsilon$ decreases, 
which is why the hypotheses in Theorem~\ref{thm:main} are a priori weaker than those in Theorem~\ref{thm:main0}. This contrasts with the classical setting of Bowen, where specification at a fixed scale and expansivity at that same scale together imply the specification property at all smaller scales. This can be shown by using expansivity and compactness to find $N=N(\delta,\epsilon)$ such that $B_N(x,\epsilon) \subset B(x,\delta)$ for every $x$, so that specification at scale $\epsilon$ with gap size $\tau$ implies specification at scale $\delta$ with gap size $\tau + N(\delta,\epsilon)$. This argument does not work in our setting, because although such  an $N=N(x,\delta,\epsilon)$ exists for each $x\notin \NE(\epsilon)$, it cannot be chosen uniformly.
\end{remark}

\section{Proofs}

Throughout this section, $(X,d)$ is a compact metric space and $f\colon X\to X$ is continuous.  Large parts of the proof work without the full strength of the hypotheses of Theorem~\ref{thm:main}, so for each partial result we state precisely which conditions are needed.  Our method is inspired by Bowen's original proof from~\cite{rB74}, incorporating the modifications from~\cite{beta-factor} that allow us to use the weakened version of specification.

We outline the strategy of the proof.  First, we obtain a series of combinatorial estimates which yield bounds on the maximum cardinalities of various $(n,\delta)$-separated sets (Lemmas \ref{lem:ssum}-\ref{lem:sleq2}). 
 These estimates allow us to construct a measure of maximal entropy with a certain weak Gibbs property (Lemma \ref{lem:GibbsG}), which we show is ergodic (Proposition \ref{prop:reallyweakmix}). We use the weak Gibbs property, together with a crucial combinatorial estimate on positive measure sets (Lemma \ref{lem:posmeas}), to rule out the existence of any other measure of maximal entropy.

\subsection{Lower bounds on $X$}

We obtain a lower bound on the maximum cardinality of $(n,\delta)$-separated sets for $X$ using a completely general, and rather standard, argument which is essentially contained in Bowen \cite{rB74}.

\begin{lemma}\label{lem:ssum}
$\Lamb(X,\sum_{j=1}^k n_j,2\delta) \leq \prod_{j=1}^k \Lamb(X,n_j,\delta)$ for every $\delta>0$ and $n_j$.
\end{lemma}
\begin{proof}
Let $E$ be maximal $(\sum n_j,2\delta)$-separated and let $E_j$ be maximal $(n_j,\delta)$-separated.  Then $E_j$ is $(n_j,\delta)$-spanning, and we define a map $\pi\colon E\to E_1\times\cdots\times E_k$ by the condition that $f^{n_1+\cdots+n_{j-1}}(x)\in B_{n_j}(\pi_j(x),\delta)$ for each $j$.  This map is injective.
\end{proof}

\begin{lemma}\label{lem:sgeq}
For every $n\in \NN$ and $\delta>0$, we have $\Lamb(X,n,\delta) \geq e^{nh(X,2\delta)}$.
\end{lemma}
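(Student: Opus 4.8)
The plan is to leverage the sub-multiplicativity estimate of Lemma~\ref{lem:ssum} in the degenerate case where every block has the same length. Specifically, I would apply Lemma~\ref{lem:ssum} with $k$ blocks, each of length $n_j = n$, to obtain
\[
\Lamb\!\left(X, kn, 2\delta\right) \le \Lamb(X, n, \delta)^{k}
\]
for every $k \in \NN$. This single inequality already contains essentially all the content of the lemma; the remaining work is just extracting the asymptotics from it.

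Next I would take logarithms and divide by $kn$, giving
\[
\frac{1}{kn} \log \Lamb\!\left(X, kn, 2\delta\right) \le \frac{1}{n} \log \Lamb(X, n, \delta).
\]
The right-hand side is independent of $k$, so the goal is to send $k \to \infty$ on the left and identify the limit with $h(X, 2\delta)$. Here I would invoke the fact, recorded in the remark following the definition of entropy, that because $X$ is invariant the limit defining the capacity entropy at scale $2\delta$ exists, i.e.\ $\lh(X, 2\delta) = \uh(X, 2\delta) = h(X, 2\delta)$. Consequently the subsequence $m = kn$ of the convergent sequence $\tfrac1m \log \Lamb(X, m, 2\delta)$ converges to the same value $h(X, 2\delta)$ as $k \to \infty$.

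Passing to the limit then yields $h(X, 2\delta) \le \tfrac{1}{n}\log \Lamb(X, n, \delta)$, and rearranging (multiplying by $n$ and exponentiating) gives exactly the desired bound $\Lamb(X, n, \delta) \ge e^{n h(X, 2\delta)}$. The only point requiring any care is the convergence of the subsequence to the full limit, and this is immediate once one knows the full limit exists. It is worth emphasizing that the existence of the entropy for the invariant set $X$ is precisely what makes the clean inequality possible: without it one would recover only the weaker statement involving $\lh$ or $\uh$ in place of the two-sided value $h$.
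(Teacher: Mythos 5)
Your proof is correct and follows essentially the same route as the paper: apply Lemma~\ref{lem:ssum} with $k$ equal blocks of length $n$, take logarithms, and send $k\to\infty$, using the fact that $h(X,2\delta)$ exists as a genuine limit (since $X$ is invariant) to identify the subsequential limit along $m=kn$. Your write-up just makes explicit the convergence point that the paper's one-line proof leaves implicit.
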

\begin{proof}
By Lemma~\ref{lem:ssum} we have $\Lamb(X,kn,2\delta) \leq (\Lamb(X,n,\delta))^k$.  It follows that $\log \Lamb(X,n,\delta)\geq n (\frac 1{kn} \log \Lamb(X,kn,2\delta))$, and we send $k\to\infty$.
\end{proof}

\subsection{Upper bounds on $\GGG$}

Using a standard argument based on the specification property of $\GGG$, we obtain upper bounds on the cardinality of an $(n,\delta)$-separated set in $\GGG_n$.

\begin{lemma}\label{lem:ssumspec}
If $\GGG \subset X\times \NN$ has specification at scale $\delta$ with gap size $\tau$, then
\[
\Lamb(X,n_1+\cdots+n_k + (k-1)\tau, \delta) \geq \prod_{j=1}^k \Lamb(\GGG,n_j,3\delta)
\]
for every $n_1,\dots,n_k$.
\end{lemma}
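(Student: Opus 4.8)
The plan is to exploit the specification property of $\GGG$ to inject a product of separated sets in $\GGG$ into a single separated set for $X$. Fix maximal $(n_j, 3\delta)$-separated sets $E_j \subset \GGG_{n_j}$ for each $j$, so that $\#E_j = \Lamb(\GGG, n_j, 3\delta)$. For each choice of $(x_1, \dots, x_k) \in E_1 \times \cdots \times E_k$, the specification property (equation~\eqref{eqn:spec}) guarantees a point $y$ whose orbit $3\delta$-shadows $x_1$ for $n_1$ steps, waits $\tau$ steps, then shadows $x_2$, and so on; more precisely, $y \in \bigcap_{j=1}^k f^{-\sum_{i<j}(n_i+\tau)} B_{n_j}(x_j, 3\delta)$. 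Actually I should be slightly careful: specification is applied at scale $\delta$ to produce shadowing within $\delta$, but the separation of the $E_j$ is at scale $3\delta$; this mismatch between the shadowing scale and the separation scale is exactly what I will need in the counting step, so let me keep both scales explicit.

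The core of the argument is a counting map. I would define $\Psi \colon E_1 \times \cdots \times E_k \to X$ by sending each tuple $(x_1, \dots, x_k)$ to a point $y = y(x_1,\dots,x_k)$ provided by specification at scale $\delta$, so that $d_{n_j}(f^{m_j} y, x_j) < \delta$ where $m_j = \sum_{i<j}(n_i + \tau)$ is the starting time of the $j$-th block. Write $N = n_1 + \cdots + n_k + (k-1)\tau$ for the total length. The key claim is that the image $\Psi(E_1 \times \cdots \times E_k)$ is an $(N, \delta)$-separated subset of $X$, and that $\Psi$ is injective. Both follow from the same estimate: if two tuples differ in the $j$-th coordinate, say $x_j \neq x_j'$, then since $E_j$ is $(n_j, 3\delta)$-separated we have $d_{n_j}(x_j, x_j') \geq 3\delta$, whereas the two image points $y, y'$ each shadow their respective tuples to within $\delta$ on that block. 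By the triangle inequality applied along the block starting at time $m_j$, we get $d_N(y, y') \geq d_{n_j}(f^{m_j}y, f^{m_j}y') \geq 3\delta - \delta - \delta = \delta$, so $y$ and $y'$ are $(N,\delta)$-separated (and in particular distinct, giving injectivity). Hence
\[
\Lamb(X, N, \delta) \geq \#\Psi(E_1 \times \cdots \times E_k) = \prod_{j=1}^k \#E_j = \prod_{j=1}^k \Lamb(\GGG, n_j, 3\delta),
\]
which is exactly the claimed inequality.

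The step I expect to be the main obstacle is the separation estimate, specifically getting the scale bookkeeping right: the factor-of-three gap between the separation scale $3\delta$ of the $E_j$ and the shadowing scale $\delta$ is what makes the triangle inequality $3\delta - \delta - \delta = \delta$ close with room to spare, and this is precisely why the lemma is stated with separation at $3\delta$ but the conclusion at $\delta$. I would want to double check that the window $[m_j, m_j + n_j)$ used for the $j$-th block lies entirely within $[0, N)$, which holds by the definition of $N$, so the bound $d_N(y,y') \geq d_{n_j}(f^{m_j}y, f^{m_j}y')$ is legitimate. Everything else—the existence of the shadowing point, the product structure of the separated sets—is a routine application of the definitions, so no further difficulty is anticipated beyond this careful accounting of scales.
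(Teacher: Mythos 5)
Your proof is correct and follows essentially the same route as the paper: choose maximal $(n_j,3\delta)$-separated sets $E_j\subset\GGG_{n_j}$, map each tuple via specification to a shadowing point, and use the triangle inequality ($3\delta-\delta-\delta=\delta$) to see the image is $(n_1+\cdots+n_k+(k-1)\tau,\delta)$-separated, hence the map is injective. The paper leaves this last computation implicit (referring to Lemma~\ref{lem:ssum}); you have simply written it out, including the correct check that each block window lies inside $[0,N)$.
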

\begin{proof}
Let $E_j\subset \GGG_{n_j}$ be a $(n_j,3\delta)$-separated set of maximum cardinality, and use the specification property to define a map $\pi\colon E_1\times\cdots E_k \to X$ such that $\pi(x_1,\dots,x_k)$ is contained in the intersection from~\eqref{eqn:spec}.  A computation similar to the proof of Lemma~\ref{lem:ssum} shows that the image of $\pi$ is $(n_1+\cdots+n_k+ (k-1)\tau,\delta)$-separated.
\end{proof}

\begin{lemma}\label{lem:sleq}
If $\GGG \subset X\times \NN$ has specification at scale $\delta$ with gap size $\tau$, then $\Lamb(\GGG,n,3\delta) \leq e^{(n+\tau)h(X,\delta)}$ for every $n\in \NN$.
\end{lemma}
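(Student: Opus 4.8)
The plan is to deduce this upper bound directly from the lower bound of Lemma~\ref{lem:ssumspec}, following the standard device for passing between a specification estimate and topological entropy. First I would apply Lemma~\ref{lem:ssumspec} with $n_1 = \cdots = n_k = n$ for an arbitrary $k\in\NN$. Since each factor on the right-hand side is then equal to $\Lamb(\GGG,n,3\delta)$, this yields
\[
\Lamb(\GGG,n,3\delta)^k \leq \Lamb\bigl(X,\, kn + (k-1)\tau,\, \delta\bigr).
\]

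Next I would take logarithms and divide by $k$, writing the right-hand side in a form that isolates the entropy:
\[
\log \Lamb(\GGG,n,3\delta) \leq \frac{kn+(k-1)\tau}{k}\cdot \frac{1}{kn+(k-1)\tau}\log \Lamb\bigl(X,\, kn+(k-1)\tau,\, \delta\bigr).
\]
I would then send $k\to\infty$. The first factor tends to $n+\tau$, and the second factor tends to $h(X,\delta)$: here I use that the capacity entropy $h(X,\delta)$ exists as a genuine limit because $X$ is invariant, as noted in the remark following~\eqref{eqn:uh}, so that the averages $\tfrac{1}{m}\log \Lamb(X,m,\delta)$ converge to $h(X,\delta)$ along the subsequence $m = kn+(k-1)\tau$. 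Passing to the limit gives $\log \Lamb(\GGG,n,3\delta) \leq (n+\tau)h(X,\delta)$, and exponentiating yields the claim.

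The computation is routine; the only point meriting attention is that the exponent is $(n+\tau)h(X,\delta)$ rather than $nh(X,\delta)$, reflecting the gap of length $\tau$ that specification inserts between consecutive copies of the orbit segment. The additive $\tau$ survives the limit precisely because the total length of the concatenation is $kn+(k-1)\tau$, whose ratio to $k$ tends to $n+\tau$. I do not anticipate any genuine obstacle here beyond this bookkeeping, provided the existence of the limit defining $h(X,\delta)$ is invoked to justify the convergence along the stated subsequence.
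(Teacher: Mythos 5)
Your proof is correct and follows essentially the same route as the paper: apply Lemma~\ref{lem:ssumspec} with $n_1=\cdots=n_k=n$, take logarithms, divide by $k$, and let $k\to\infty$, using that $h(X,\delta)$ exists as a limit since $X$ is invariant. The only cosmetic difference is that the paper first pads $kn+(k-1)\tau$ up to $k(n+\tau)$ via monotonicity of $\Lamb(X,\cdot,\delta)$, whereas you keep the length $kn+(k-1)\tau$ and observe directly that its ratio to $k$ tends to $n+\tau$; both handle the bookkeeping equally well.
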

\begin{proof}
Every $(m,\delta)$-separated set is $(n,\delta)$-separated for $n>m$.  Thus Lemma~\ref{lem:ssumspec} implies $\Lamb(X,k(n+\tau),\delta) \geq (\Lamb(\GGG,n,3\delta))^k$, and the result follows by sending $k\to\infty$ in the resulting inequality $\log \Lamb(\GGG,n,3\delta)\leq (n+\tau) (\frac 1{k(n+\tau)} \log \Lamb(X,k(n+\tau),\delta))$.
\end{proof}

\subsection{Lower bounds on $\GGG$ and related sets}

First, we prove the following useful summability result.

\begin{lemma} \label{summability}
If $\PPP,\SSS\subset X\times \NN$ are such that $\uh(\PPP\cup\SSS,\delta)<h(X,\delta)$, then
$\sum_{i\geq M} \Lamb(\PPP\cup\SSS,i,\delta^\prime) e^{-ih(X,\delta)} \to 0$ as $M\to\infty$ for every $\delta^\prime\geq\delta$.
\end{lemma}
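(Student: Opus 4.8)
The plan is to establish the summability by showing that the tail sum is dominated by a geometric series with ratio strictly less than $1$. The key observation is that $\uh(\PPP\cup\SSS,\delta) < h(X,\delta)$ gives us room: by definition of $\uh$ as a $\varlimsup$, there is a constant $c$ with $\uh(\PPP\cup\SSS,\delta) < c < h(X,\delta)$, and we can find $M_0$ such that $\Lamb(\PPP\cup\SSS,i,\delta) \leq e^{ic}$ for all $i\geq M_0$. Each term of the tail is then at most $e^{ic}e^{-ih(X,\delta)} = e^{-i(h(X,\delta)-c)}$, and since $h(X,\delta)-c > 0$ this is a convergent geometric series, whose tail $\sum_{i\geq M}$ goes to $0$ as $M\to\infty$.

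The one wrinkle to address is that the statement asks for summability with $\delta'\geq\delta$ appearing in $\Lamb(\PPP\cup\SSS,i,\delta')$, whereas the hypothesis controls the entropy only at scale $\delta$. First I would note that separated sets become smaller as the scale grows: a $(i,\delta')$-separated set with $\delta'\geq\delta$ is automatically $(i,\delta)$-separated, since $d_i(x,y)\geq\delta'\geq\delta$ for distinct points $x,y$ in the set. Hence $\Lamb(\PPP\cup\SSS,i,\delta') \leq \Lamb(\PPP\cup\SSS,i,\delta)$ for every $i$, and the bound at scale $\delta$ controls every larger scale uniformly. This reduces the general $\delta'$ case to the case $\delta'=\delta$, so it suffices to prove summability at scale $\delta$.

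Putting these together, I would carry out the following steps in order. First, fix $c$ strictly between $\uh(\PPP\cup\SSS,\delta)$ and $h(X,\delta)$. Second, use the $\varlimsup$ definition to produce $M_0$ with $\Lamb(\PPP\cup\SSS,i,\delta)\leq e^{ic}$ for all $i\geq M_0$. Third, apply the monotonicity-in-scale inequality to pass from $\delta'$ back to $\delta$. Fourth, for $M\geq M_0$ bound the tail by $\sum_{i\geq M} e^{-i(h(X,\delta)-c)}$ and observe this is the tail of a convergent geometric series, hence tends to $0$. Since $h(X,\delta)$ exists (the remark after the entropy definitions guarantees $\lh(X,\delta)=\uh(X,\delta)$ because $X$ is invariant), all these quantities are well-defined.

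None of these steps is a serious obstacle; the result is essentially a packaging of the elementary fact that a geometric tail vanishes, combined with the gap in the entropy hypothesis. The only point requiring a moment's care is the scale comparison for $\delta'\geq\delta$, which is where I would be most careful to state the separated-set inclusion correctly; everything else is a routine estimate.
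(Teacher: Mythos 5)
Your proposal is correct and follows essentially the same route as the paper's proof: both exploit the entropy gap to get an exponential bound $\Lamb(\PPP\cup\SSS,i,\delta)\lesssim e^{i c}$ with $c < h(X,\delta)$, reduce the scale $\delta'$ to $\delta$ via the fact that $(i,\delta')$-separated sets are $(i,\delta)$-separated, and conclude with a geometric tail estimate. The only cosmetic difference is that the paper absorbs the finitely many exceptional terms into a constant $C$ valid for all $n$, whereas you restrict to tails $M\geq M_0$; both handle that point equally well.
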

\begin{proof}
Write $h:=h(X,\delta)$ and let $\gamma>0$ be such that $\uh(\PPP\cup\SSS,\delta) < h-2\gamma$.  Then there exists a constant $C>0$ such that $\Lambda(\PPP\cup\SSS,n,\delta) < C e^{n(h-\gamma)}$ for all $n$, and so
\[
e^{-nh} \Lamb(\PPP\cup\SSS,n, \delta^\prime) \leq e^{-nh} \Lamb(\PPP\cup\SSS,n, \delta) < Ce^{-nh}e^{n(h-\gamma)} = Ce^{-n\gamma}.
\]
Thus $\sum_{n=0}^\infty e^{-nh} \Lamb(\PPP\cup\SSS,n,\delta^\prime) < C\sum_{n=0}^\infty (e^{-\gamma})^n < \infty$, and the tail of the series converges to $0$.
\end{proof}

The following result holds for decompositions where $\GGG$ has specification, without any conditions on $\GGG^M$ or on expansivity. The result should be interpreted as a lower bound on the growth rate of a `fattened up' version of $\GGG$.

\begin{lemma}\label{lem:lotsinG}
Let $(\PPP,\GGG,\SSS)$ be a decomposition for $(X,f)$ such that
\begin{enumerate}
\item \label{delta-spec} $\GGG$ has specification at scale $\delta$ with gluing time $\tau$, and
\item \label{uh-less} $\uh(\PPP\cup\SSS,\delta) < h(X,\delta)$.
\end{enumerate}
Then for every $\gamma_1, \gamma_2>0$ there exists $M\in \NN$ such that the following is true:  For any $\DDD \subset X\times \NN$ such that $\Lamb(\DDD,n,6\delta) \geq \gamma_1 e^{n h(X,\delta)}$ for all $n$, we have $\Lamb(\DDD \cap \GGG^M,n,6\delta) \geq (1-\gamma_2) \Lamb(\DDD,n,6\delta)$ for all $n$.
\end{lemma}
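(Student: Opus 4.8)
The plan is to show that the ``bad'' part $\DDD\setminus\GGG^M$, consisting of orbit segments whose prefix or suffix exceeds length $M$, carries exponentially fewer separated points than the hypothesised lower bound on $\DDD$. First I would reduce to a purely combinatorial statement: if $E$ is a maximal $(n,6\delta)$-separated subset of $\DDD_n$, then $E\cap(\GGG^M)_n$ is $(n,6\delta)$-separated in $(\DDD\cap\GGG^M)_n$, so $\Lamb(\DDD\cap\GGG^M,n,6\delta)\ge \#E-\#(E\setminus(\GGG^M)_n)\ge \Lamb(\DDD,n,6\delta)-\Lamb(\DDD\setminus\GGG^M,n,6\delta)$. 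Hence it suffices to produce an $M$ (depending only on $\gamma_1,\gamma_2$ and the decomposition, not on $\DDD$ or $n$) with $\Lamb(\DDD\setminus\GGG^M,n,6\delta)\le\gamma_2\Lamb(\DDD,n,6\delta)$; invoking the hypothesis $\Lamb(\DDD,n,6\delta)\ge\gamma_1 e^{nh(X,\delta)}$, it is enough to prove the $\DDD$-free bound $\Lamb(\DDD\setminus\GGG^M,n,6\delta)\le\gamma_1\gamma_2 e^{nh(X,\delta)}$.

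The crucial observation is that even a segment in $\DDD\setminus\GGG^M$ still admits the full decomposition $(x,p)\in\PPP$, $(f^px,g)\in\GGG$, $(f^{p+g}x,s)\in\SSS$ with $p+g+s=n$: it fails to lie in $\GGG^M$ only because $p>M$ or $s>M$, while its core remains in $\GGG$. Splitting $\DDD\setminus\GGG^M$ into $B_n=\{x\mid p(x,n)>M\}$ and $C_n=\{x\mid s(x,n)>M\}$ and using subadditivity of $\Lamb$ over this union, I would partition $B_n$ according to the decomposition lengths $(p,g,s)$. For fixed lengths, mapping each point to the triple of its nearest elements in maximal $(p,3\delta)$-, $(g,3\delta)$- and $(s,3\delta)$-separated sets of $\PPP_p$, $\GGG_g$, $\SSS_s$ is injective on any $(n,6\delta)$-separated set: two points sharing all three approximations agree to within $6\delta$ in $d_n$, estimated piece by piece. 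This gives that the number of such points with $(p,g,s)$ fixed is at most $\Lamb(\PPP,p,3\delta)\,\Lamb(\GGG,g,3\delta)\,\Lamb(\SSS,s,3\delta)$.

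Now the specification hypothesis enters through Lemma~\ref{lem:sleq}: the core factor obeys the clean exponential bound $\Lamb(\GGG,g,3\delta)\le e^{(g+\tau)h(X,\delta)}$, which is exactly what lets me avoid bounding the long middle as an arbitrary piece of $X$. Bounding the prefix and suffix factors by $\Lamb(\PPP\cup\SSS,\cdot,3\delta)$, writing $h=h(X,\delta)$ and factoring $e^{gh}=e^{nh}e^{-ph}e^{-sh}$, I obtain $\Lamb(B_n,n,6\delta)\le e^{\tau h}e^{nh}\bigl(\sum_{p>M}\Lamb(\PPP\cup\SSS,p,3\delta)e^{-ph}\bigr)\bigl(\sum_{s\ge0}\Lamb(\PPP\cup\SSS,s,3\delta)e^{-sh}\bigr)$, after dropping the constraint $g\ge0$ (which only enlarges the sum). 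By Lemma~\ref{summability} applied with $\delta'=3\delta$, the second series converges and the first, a tail, tends to $0$ as $M\to\infty$; choosing $M$ large makes the product $\le\tfrac12\gamma_1\gamma_2 e^{nh}$, uniformly in $n$ and independently of $\DDD$. The symmetric estimate for $C_n$ gives the same bound, and summing the two completes the argument.

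The main obstacle is conceptual rather than computational: one must resist bounding the length-$g$ middle portion as a generic orbit segment in $X$, for which no clean fixed-scale exponential upper bound is available (precisely the scale-change phenomenon flagged in the paper), and instead exploit that this portion lies in $\GGG$, so that Lemma~\ref{lem:sleq} supplies the needed $e^{(g+\tau)h}$ control. After that, the only care required is the bookkeeping of scales, namely that separation at $6\delta$ is detected by $3\delta$-spanning approximations in each of the three pieces, and confirming that $M$ may be chosen independently of $\DDD$ and $n$, as the statement demands.
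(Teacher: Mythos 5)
Your proof is correct and takes essentially the same route as the paper's: both partition orbit segments by the lengths $(p,g,s)$ of their decomposition, inject $(n,6\delta)$-separated points into products of $(\cdot,3\delta)$-separated sets from $\PPP$, $\GGG$, $\SSS$, control the core factor via Lemma~\ref{lem:sleq}, make the prefix/suffix tails small via Lemma~\ref{summability}, and choose $M$ uniformly in $n$ and $\DDD$. The only cosmetic difference is that you bound $\Lamb(\DDD\setminus\GGG^M,n,6\delta)$ separately and subtract, whereas the paper splits a single maximal $(n,6\delta)$-separated set of $\DDD_n$ directly into its good part $D_n^M$ and the bad remainder.
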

\begin{proof}
Let $D_n \subset \DDD$ be a $(n,6\delta)$-separated set of maximum cardinality, and given $i,j,k\in \NN$, let
\[
D(i,j,k) = \{x\in D_n \mid n=i+j+k, p(x,n)=i, g(x,n) = j, s(x,n)=k \},
\]
where $p,g,s$ are as in Definition~\ref{def:decomp}.  Observe that $\bigcup\{ D(i,j,k) \mid i,k \leq M \} \subset \DDD \cap \GGG^M$ for every $M$, and in particular
\[
D_n^M := \bigcup\{ D(i,j,k) \mid i+j+k=n, i,k\leq M \}
\]
is an $(n,6\delta)$-separated subset of $\DDD \cap \GGG^M$, so $\Lamb(\DDD \cap\GGG^M,n,6\delta) \geq \#D_n^M$.

Let $E^P_i \subset \PPP_i$ be maximal $(i,3\delta)$-separated, and similarly for $E^G_j \subset \GGG_j$ and $E^S_k \subset \SSS_k$.  As in Lemma~\ref{lem:ssum}, there is an injection $\pi\colon D(i,j,k) \to E^P_i \times E^G_j \times E^S_k$, and we obtain
\[
\Lamb(\DDD,n,6\delta) \leq \sum_{\substack{i+j+k=n \\ i,k\leq M}} \# D(i,j,k) + \sum_{\substack{i+j+k=n \\ i\vee k\geq M}} \Lamb(\PPP,i,3\delta) \Lamb(\GGG,j,3\delta) \Lamb(\SSS,k,3\delta).
\]
The first sum is equal to $\#D_n^M$.  By Condition~\eqref{delta-spec}, Lemma~\ref{lem:sleq} gives
\[
\Lamb(\DDD,n,6\delta) \leq \Lamb(\DDD\cap \GGG^M,n,6\delta) + \sum_{\substack{i+j+k=n \\ i\vee k\geq M}} \Lamb(\PPP,i,3\delta) \Lamb(\SSS,k,3\delta) e^{(j+\tau)h(X,\delta)}.
\]
Write $a_i = \Lamb(\PPP\cup\SSS,i,3\delta) e^{-ih(X,\delta)}$ and observe that by Condition~\eqref{uh-less} and Lemma \ref{summability}, we have $b_M := \sum_{i\geq M} a_i \to 0$ as $M\to\infty$.  Together with the condition on $\DDD$, this yields
\begin{align*}
\Lamb(\DDD,n,6\delta) &\leq \Lamb(\DDD\cap\GGG^M,n,6\delta) + e^{(n+\tau)h(X,\delta)} \sum_{i \vee k\geq M} a_i a_k \\
&\leq \Lamb(\DDD\cap\GGG^M,n,6\delta) + \gamma_1^{-1} \Lamb(\DDD,n,6\delta) e^{\tau h(X,\delta)} b_M^2.
\end{align*}
Choosing $M$ such that $\gamma_1^{-1} e^{\tau h(X,\delta)} b_M^2 < \gamma_2$, the proof is complete.
\end{proof}

\begin{remark}
If we assume the hypotheses of Theorem \ref{thm:main}, then we have $\hspec(f, \delta)< \htop(f) = \htop(f, \delta)$ for every $\delta\in[\epsilon,28\epsilon]$, where the equality uses Proposition~\ref{prop:entropy-appeared}. This gives the existence of a decomposition $(\PPP,\GGG,\SSS)$ for $(X,f)$ so that $\GGG^M$ has specification for each $M$, and $\uh(\PPP\cup\SSS,\delta)<h(X,\delta)$. Thus, the results of Lemmas \ref{summability} and \ref{lem:lotsinG} apply to this decomposition. This is the only place in the proof of Theorem \ref{thm:main} where we use the assumption  $\hspec(f, \epsilon)< \htop(f)$.
\end{remark}


\subsection{Upper bounds on $\Lambda(X, n, 6\delta)$} 
From now on, we assume that $\hspec(f,\delta) < h(X, 
\delta)$.
Thus, there is a decomposition $(\PPP,\GGG,\SSS)$ for $(X,f)$ satisfying
\begin{enumerate}
\item each $\GGG^M$ has specification at scale $\delta$ with gluing time $\tau_M$, and
\item $\uh(\PPP\cup\SSS,\delta) < h(X,\delta)$.
\end{enumerate}
In this section, we also assume that
\begin{equation}\label{eqn:heq}
h(X,12\delta) = h(X,\delta),
\end{equation}
which, by Proposition \ref{prop:entropy-appeared}, is a weaker assumption than $\hexp (f,12 \delta) <h(X)$. 
\begin{lemma}\label{lem:lotsinG2}
Under the above conditions, for every $\gamma>0$ there exists $M$ such that $\Lamb(\GGG^M,n,6\delta) \geq (1-\gamma) \Lamb(X,n,6\delta)$ for all $n$.
\end{lemma}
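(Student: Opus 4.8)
The goal is to show that $\GGG^M$ eventually captures almost all of the $(n,6\delta)$-separated points of $X$, uniformly in $n$. The plan is to apply Lemma~\ref{lem:lotsinG} with the choice $\DDD = X\times\NN$, which requires first verifying the lower-bound hypothesis $\Lamb(X,n,6\delta)\geq \gamma_1 e^{nh(X,\delta)}$ for some fixed $\gamma_1>0$ and all $n$.

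First I would establish this lower bound. By Lemma~\ref{lem:sgeq} applied at scale $3\delta$, we have $\Lamb(X,n,6\delta)\geq e^{nh(X,12\delta)}$. Here is where the expansivity-type hypothesis enters: by the standing assumption~\eqref{eqn:heq}, $h(X,12\delta)=h(X,\delta)$, so in fact $\Lamb(X,n,6\delta)\geq e^{nh(X,\delta)}$ for all $n$. Thus the hypothesis of Lemma~\ref{lem:lotsinG} holds with $\gamma_1=1$ (and $\DDD=X\times\NN$, for which $\DDD\cap\GGG^M=\GGG^M$ and $\Lamb(\DDD,n,6\delta)=\Lamb(X,n,6\delta)$).

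Next, I would check that the decomposition $(\PPP,\GGG,\SSS)$ fixed at the start of this subsection satisfies both hypotheses of Lemma~\ref{lem:lotsinG}. Condition~\eqref{uh-less}, namely $\uh(\PPP\cup\SSS,\delta)<h(X,\delta)$, is exactly the standing assumption coming from $\hspec(f,\delta)<h(X,\delta)$. Condition~\eqref{delta-spec} requires that $\GGG$ itself have specification at scale $\delta$; since each $\GGG^M$ has specification and $X\times\NN=\bigcup_M\GGG^M$, one takes $\GGG=\GGG^{M_0}$ for a suitable $M_0$ — or more carefully, notes that Lemma~\ref{lem:lotsinG} only uses the specification of the glued core, so I would verify that the statement applies with the core playing the role of $\GGG$. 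Given all this, Lemma~\ref{lem:lotsinG} produces, for the given $\gamma=\gamma_2$, an $M\in\NN$ with $\Lamb(\GGG^M,n,6\delta)\geq(1-\gamma)\Lamb(X,n,6\delta)$ for all $n$, which is the desired conclusion.

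The main obstacle I anticipate is the bookkeeping around the specification hypothesis on $\GGG$ versus the family $\GGG^M$: Lemma~\ref{lem:lotsinG} is stated for a $\GGG$ having specification with a single gluing time $\tau$, whereas in the present setting it is the levels $\GGG^M$ that have specification, each with its own $\tau_M$. I would need to be careful that the $\GGG$ appearing in the hypotheses of Lemma~\ref{lem:lotsinG} can be taken to be one of these levels (so that a fixed $\tau$ is available), and that this level is large enough to serve in the decomposition while still satisfying the entropy gap. This is the one point where the argument is not purely a mechanical substitution into the previous lemma, and it deserves an explicit sentence rather than being left implicit.
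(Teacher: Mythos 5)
Your argument is correct and is essentially identical to the paper's proof, which reads in full: Lemma~\ref{lem:sgeq} together with~\eqref{eqn:heq} gives $\Lamb(X,n,6\delta)\geq e^{nh(X,\delta)}$, so Lemma~\ref{lem:lotsinG} applies with $\DDD=X\times\NN$, $\gamma_1=1$, $\gamma_2=\gamma$ (your only slip is cosmetic: Lemma~\ref{lem:sgeq} is invoked with its scale parameter equal to $6\delta$, not $3\delta$). As for the specification bookkeeping you flag, the clean resolution is that $\GGG\subseteq\GGG^{M_0}$ for every $M_0$ and the specification property passes to subcollections with the same gap time, so $\GGG$ itself has specification at scale $\delta$ with gap $\tau_{M_0}$ for any fixed $M_0$; the paper applies Lemma~\ref{lem:lotsinG} without comment on this point, so your explicit sentence is a fair (and harmless) addition rather than a genuinely different route.
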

\begin{proof}
Lemma~\ref{lem:sgeq} together with~\eqref{eqn:heq} gives $\Lamb(X,n,6\delta) \geq e^{nh(X,\delta)}$, so we can apply Lemma~\ref{lem:lotsinG}.
\end{proof}

\begin{lemma}\label{lem:sleq2}
There exists $C_1\in \RR$ such that $e^{nh(\delta)} \leq \Lamb(X,n,6\delta) \leq C_1 e^{nh(\delta)}$ for all $n$.
\end{lemma}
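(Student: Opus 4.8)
The two inequalities come from completely different sources, so I would treat them separately. The lower bound $e^{nh(X,\delta)} \le \Lamb(X,n,6\delta)$ is immediate: applying Lemma~\ref{lem:sgeq} with $6\delta$ in place of $\delta$ gives $\Lamb(X,n,6\delta) \ge e^{nh(X,12\delta)}$, and the standing assumption~\eqref{eqn:heq} that $h(X,12\delta)=h(X,\delta)$ converts this into exactly the desired bound. This is in fact the same computation already invoked at the start of the proof of Lemma~\ref{lem:lotsinG2}.

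For the upper bound the plan is to sandwich $\Lamb(X,n,6\delta)$ between the two estimates available for a single level $\GGG^M$ of the filtration: the lower bound saying that $\GGG^M$ carries a fixed proportion of the separated points of $X$, and the specification upper bound on that same quantity. Concretely, I would first fix $\gamma=\tfrac12$ and feed it into Lemma~\ref{lem:lotsinG2} to obtain an $M$ with $\Lamb(\GGG^M,n,6\delta) \ge \tfrac12\,\Lamb(X,n,6\delta)$ for every $n$. Since $\GGG^M$ has specification at scale $\delta$ with some gluing time $\tau_M$, Lemma~\ref{lem:sleq} gives $\Lamb(\GGG^M,n,3\delta) \le e^{(n+\tau_M)h(X,\delta)}$, and because a separated set only shrinks as the scale grows we have $\Lamb(\GGG^M,n,6\delta)\le \Lamb(\GGG^M,n,3\delta)$. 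Chaining these yields $\tfrac12\,\Lamb(X,n,6\delta) \le e^{(n+\tau_M)h(X,\delta)}$, that is $\Lamb(X,n,6\delta) \le 2e^{\tau_M h(X,\delta)}\,e^{nh(X,\delta)}$, so one can take $C_1 = 2e^{\tau_M h(X,\delta)}$.

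The one point that needs care — and what I would flag as the crux — is that $C_1$ must not depend on $n$. This works precisely because $\gamma$, and hence $M$ and the gluing time $\tau_M$, is chosen once and for all before $n$ enters the argument, so the entire dependence on $\GGG^M$ collapses into the multiplicative prefactor $e^{\tau_M h(X,\delta)}$. By contrast, trying to push $M\to\infty$ (to exploit $\gamma\to 0$) would let the gluing times $\tau_M$ blow up and the constant would be lost; the argument therefore deliberately commits to a single, possibly large, $M$. The scale mismatch between the $6\delta$ we are counting and the $3\delta$ appearing in Lemma~\ref{lem:sleq} is harmless, being absorbed by monotonicity of $\Lamb$ in the scale as noted above.
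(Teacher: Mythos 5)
Your proof is correct and matches the paper's argument exactly: the lower bound via Lemma~\ref{lem:sgeq} at scale $6\delta$ combined with~\eqref{eqn:heq}, and the upper bound by fixing $\gamma=\tfrac12$ in Lemma~\ref{lem:lotsinG2} and then applying Lemma~\ref{lem:sleq} to $\GGG^M$, yielding $C_1 = 2e^{\tau_M h(X,\delta)}$. Your explicit remark that the scale mismatch $6\delta$ versus $3\delta$ is absorbed by monotonicity of $\Lamb$ is a point the paper leaves implicit, but it is the same proof.
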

\begin{proof}
As above, the first inequality follows from Lemma~\ref{lem:sgeq} and~\eqref{eqn:heq}.  The second inequality follows by applying Lemma~\ref{lem:lotsinG2} with $\gamma=\frac12$ and then applying Lemma~\ref{lem:sleq} to $\GGG^M$, so that 
\[
\Lamb(X,n,6\delta) \leq 2\Lamb(\GGG^M,n,6\delta) \leq (2e^{\tau_Mh(X,\delta)})e^{nh(X,\delta)}.\qedhere
\]
\end{proof}

\subsection{An MME with a Gibbs property}\label{sec:Gibbs}
We replace~\eqref{eqn:heq} with the stronger assumption that for sufficiently small $\delta >0$, 
\begin{equation}\label{eqn:heq2}
h(X,\delta) = h(X).
\end{equation}
This ensures the existence of an MME, constructed using the following standard argument. 
Let $E_n \subset X$ be a maximal $(n,\delta)$-separated set, and take
\[
\nu_n := \frac{1}{\# E_n}\sum_{x\in E_n} \delta_x.
\]
In order to obtain invariant measures, we consider the measures
\begin{equation}\label{eqn:mun}
\mu_n := \frac{1}{n}\sum_{k=0}^{n-1} f_*^k \nu_n
\end{equation}
and let $\mu$ be a weak* limit of the sequence $\{\mu_n\}$.  

\begin{lemma}\label{lem:MME}
$\mu$ is a measure of maximal entropy.
\end{lemma}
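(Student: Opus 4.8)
The plan is to show that the weak* limit $\mu$ of the measures $\mu_n$ defined in~\eqref{eqn:mun} is a measure of maximal entropy, i.e.\ that $h_\mu(f) = h(X)$. This is a standard argument going back to the construction of equilibrium states, but it must be executed carefully because the $\nu_n$ are not invariant and the entropy functional is only upper semicontinuous under weak* convergence. First I would record the normalizing growth rate: by Lemma~\ref{lem:sleq2} (at scale $6\delta$) and the assumption~\eqref{eqn:heq2} that $h(X,\delta)=h(X)$, we have $\#E_n$ growing like $e^{nh(X)}$ up to subexponential factors, so that $\frac1n \log \#E_n \to h(X)$.

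The core of the proof is the classical lemma (Misiurewicz's argument, as used by Bowen~\cite{rB74} and reproduced in~\cite{pW82}) which provides a lower bound on $h_\mu(f)$ in terms of $\limsup \frac1n \log\#E_n$. Concretely, I would fix a finite measurable partition $\AAA$ whose atoms have diameter less than $\delta$ and whose boundaries have $\mu$-measure zero (such partitions exist by a standard outer-regularity construction, choosing atom boundaries to avoid the at most countably many positive-measure ``spheres''). Because the atoms of $\AAA$ have small diameter, any single atom of $\bigvee_{i=0}^{n-1} f^{-i}\AAA$ contains at most one point of the $(n,\delta)$-separated set $E_n$, so
\[
H_{\nu_n}\!\left(\bigvee_{i=0}^{n-1} f^{-i}\AAA\right) = \log \#E_n.
\]
The standard telescoping/concavity estimate then converts this into a lower bound for $H_{\mu_n}$ of a fixed partition block, and passing to the weak* limit along the chosen subsequence (using that $\mu(\partial\AAA)=0$ so that $\mu_n(A)\to\mu(A)$ for each atom) yields $h_\mu(f,\AAA) \geq \limsup \frac1n\log\#E_n = h(X)$. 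Hence $h_\mu(f)\geq h(X)$, and the reverse inequality is immediate from the variational principle.

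The main obstacle is the passage to the limit in the entropy estimate. The quantity $H_{\nu_n}(\bigvee_{i=0}^{n-1} f^{-i}\AAA)$ is computed for the \emph{non-invariant} measures $\nu_n$, whereas the limit measure $\mu$ arises from the Ces\`aro-averaged $\mu_n$; reconciling these requires the Misiurewicz subadditivity trick, whereby one fixes a block length $q$, writes $n = mq + r$, groups the refinement $\bigvee_{i=0}^{n-1}f^{-i}\AAA$ into $q$ families of $q$-blocks, and uses subadditivity of $H$ together with the fact that $\frac1q H_{\mu_n}(\bigvee_{i=0}^{q-1}f^{-i}\AAA)$ controls $\frac1n H_{\nu_n}$ up to an error that vanishes as $n\to\infty$. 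One then sends $n\to\infty$ (using $\mu(\partial\AAA)=0$ so the finite partition function is continuous at $\mu$) and finally $q\to\infty$. The only genuinely delicate points are arranging the zero-boundary partition at the correct scale and controlling the $r$ boundary terms, both of which are routine; I do not anticipate any difficulty specific to the non-uniform setting here, since this lemma uses only the counting estimate of Lemma~\ref{lem:sleq2} and not the finer structure of the decomposition $(\PPP,\GGG,\SSS)$.
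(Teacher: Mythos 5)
Your proof is correct and is essentially the paper's argument: the paper simply cites the second part of~\cite[Theorem 8.6]{pW82}, whose proof is exactly the Misiurewicz argument you reproduce (uniform measures on maximal separated sets, zero-boundary partitions, the subadditivity/Ces\`aro trick), and then invokes~\eqref{eqn:heq2} to upgrade $h_\mu(f) \geq h(X,\delta)$ to $h_\mu(f) \geq h(X)$. One cosmetic point: you do not need Lemma~\ref{lem:sleq2} (which concerns scale $6\delta$) for the growth rate of $\#E_n$, since $X$ is invariant and therefore $\frac1n \log \#E_n = \frac1n \log \Lamb(X,n,\delta) \to h(X,\delta)$ holds by definition.
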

\begin{proof}
The second part of~\cite[Theorem 8.6]{pW82} shows that $h_\mu(f) \geq h(X,\delta)$. Thus, by ~\eqref{eqn:heq2}, if $\delta$ is sufficiently small, $\mu$ is an MME.
\end{proof}

So far we have used $\delta$ to denote the scale at which we work, since the results will be used for various values of $\delta$.  From now on, we fix the scale $\epsilon$ as in Theorem~\ref{thm:main}. We construct $\mu$ as above using $\delta = 6 \eps$ in the selection of the sets $E_n$.  By Proposition \ref{prop:entropy-appeared} and the assumption $\hexp(f,28\epsilon)< \htop(f)$, \eqref{eqn:heq2} is satisfied for this value of $\delta$, so $\mu$ is indeed an MME. 

We now also assume that $\hspec(f,\epsilon) < \htop(f)$, and show that $\mu$ satisfies a certain weak Gibbs property.

\begin{lemma} \label{lem:GibbsG}
For sufficiently large $M$, there exists $K_M>0$ such that for every $(x,n)\in \GGG^M$, we have
\begin{equation}\label{eqn:gibbsG}
\mu(B_n(x, 7\epsilon)) \geq K_M e^{-nh(X)}.
\end{equation}
\end{lemma}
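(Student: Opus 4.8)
The plan is to bound $\mu(B_n(x,7\epsilon))$ from below by passing through the approximating measures $\mu_N$ and exploiting the specification of $\GGG^M$. Since $\mu$ is a weak* limit of the $\mu_N$ and $B_n(x,7\epsilon)$ is open, I first replace it by a closed sub-ball $F\subset B_n(x,7\epsilon)$, so that $\mu(B_n(x,7\epsilon))\ge\mu(F)\ge\limsup_N\mu_N(F)$ by the portmanteau theorem; it then suffices to prove a uniform lower bound $\mu_N(F)\ge K_M e^{-nh(X)}$ for all large $N$ along the defining subsequence. Unwinding the definition \eqref{eqn:mun} gives the averaging identity
\[
\mu_N(F)=\frac1N\sum_{k=0}^{N-1}\nu_N(f^{-k}F)=\frac{1}{N\,\#E_N}\sum_{k=0}^{N-1}\#\big(E_N\cap f^{-k}F\big).
\]
The key conceptual point is that a bound at a single shadowing position $k$ only yields $\mu_N(F)\gtrsim \tfrac1N e^{-nh(X)}$, which is useless as $N\to\infty$ with $n$ fixed; the factor $1/N$ must be recovered by summing a lower bound over a positive-density set of positions $k$.

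For each fixed $k$ with $\tau_M\le k\le N-n-\tau_M$, I will construct many orbits that shadow $(x,n)$ at time $k$. Writing $a=k-\tau_M$ and $b=N-k-n-\tau_M$, I take maximal separated subsets of $\GGG^M_a$ and $\GGG^M_b$ and use the specification of $\GGG^M$ (at scale $\epsilon$, gluing time $\tau_M$, which applies because $(x,n)\in\GGG^M$) to glue a good prefix, then $(x,n)$, then a good suffix into a single orbit $y$ with $f^k y\in B_n(x,\epsilon)$. Choosing the good pieces separated at a sufficiently inflated scale makes the resulting family $Y_k$ an $(N,12\epsilon)$-separated set; since $E_N$ is maximal $(N,6\epsilon)$-separated, hence $(N,6\epsilon)$-spanning, the nearest-point assignment $Y_k\to E_N$ is injective, and each image $z$ satisfies $d_n(f^k z,x)<7\epsilon$. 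This yields $\#(E_N\cap f^{-k}B_n(x,7\epsilon))\ge\#Y_k$, where $\#Y_k$ is a product of good-segment counts of lengths $a$ and $b$, which I expect to be $\gtrsim e^{(a+b)h(X)}=e^{(N-n-2\tau_M)h(X)}$.

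Granting the per-position bound, I combine it with the upper bound $\#E_N\le C_1 e^{Nh(X)}$ from Lemma \ref{lem:sleq2}. Summing over the $\sim N-n-2\tau_M$ admissible positions $k$ and dividing by $N$, the $1/N$ cancels and I obtain $\mu_N(F)\ge K_M e^{-nh(X)}$, with $K_M$ an explicit constant built from $C_1^{-1}$, the proportion constant supplied by Lemma \ref{lem:lotsinG2}, and a factor $e^{-2\tau_M h(X)}$. The requirement that $M$ be large is exactly what Lemma \ref{lem:lotsinG2} needs to guarantee that a definite fraction of the relevant separated counts already lies in $\GGG^M$. Letting $N\to\infty$ along the subsequence then gives the claim.

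I expect the main obstacle to be the scale bookkeeping in the per-position count. To inject $Y_k$ into the $6\epsilon$-spanning set $E_N$, I must separate the good pieces at a scale inflated to roughly $14\epsilon$ (the $2\epsilon$ accounting for the specification shadowing on both ends, on top of the doubling needed for the spanning comparison), and I still need these inflated-scale counts to grow like $e^{ah(X)}$. This is where the expansivity hypothesis enters through Proposition \ref{prop:entropy-appeared}: the lower bound of Lemma \ref{lem:sgeq} doubles the scale once more, so the relevant entropy is $h(X,28\epsilon)$, and the assumption $\hexp(f,28\epsilon)<h(X)$ is precisely what forces $h(X,\eta)=h(X)$ for every $\eta\le 28\epsilon$, keeping all intermediate scales inside the regime where the entropies coincide. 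Establishing a $\GGG^M$-analogue of Lemma \ref{lem:lotsinG2} at this inflated scale via Lemma \ref{lem:lotsinG}, while respecting the $28\epsilon$ budget, is the delicate step; the remaining open-versus-closed-ball discrepancy at radius $7\epsilon$ is then handled by the routine closed-ball approximation described above.
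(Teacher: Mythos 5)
Your proposal is essentially the paper's own proof, step for step: the same reduction to the empirical measures $\nu_N$ and averaging over shadowing positions $\tau_M\le k\le N-n-\tau_M$, the same gluing (via specification of $\GGG^M$) of $(14\epsilon)$-separated good segments of lengths $k-\tau_M$ and $N-k-n-\tau_M$ around $(x,n)$ to get an $(N,12\epsilon)$-separated family injected into the maximal $(N,6\epsilon)$-separated set $E_N$, the same counting inputs (Lemma~\ref{lem:lotsinG2} applied at the inflated scale together with Lemmas~\ref{lem:sgeq} and~\ref{lem:sleq2}, with $\hexp(f,28\epsilon)<h(X)$ entering exactly where you say, to force $h(X,\eta)=h(X)$ for $\eta\le 28\epsilon$), and the same resulting constant $K_M=(4C_1)^{-1}e^{-2\tau_M h(X)}$. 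The one point where you deviate --- pre-emptively replacing $B_n(x,7\epsilon)$ by a fixed closed sub-ball $F$ before taking the weak* limit --- is inconsistent with your own per-position count, which places points of $E_N$ only in the \emph{open} ball (errors $<\epsilon$ from specification plus $<6\epsilon$ from spanning, with no uniform margin, so the points need not lie in any fixed proper closed subset); the repair is rather to pass the limit through the closure $\overline{B_n(x,7\epsilon)}$ and absorb the open/closed discrepancy into a harmless adjustment of scales, which is precisely the technicality the paper itself elides with ``sending $m\to\infty$ gives the result.''
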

\begin{proof}
Let $M$ be large enough to satisfy the conclusion of Lemma~\ref{lem:lotsinG2} with $\gamma=\frac12$.  Then applying Lemma~\ref{lem:sgeq} with $\delta = \frac {14}6 \epsilon$, we have $\Lamb(\GGG^M,n, 14\epsilon) \geq \frac 12 e^{nh(X)}$ for all $n$, and so there exists $E^G_n \subset \GGG_n^M$ such that $\#E^G_n \geq \frac12 e^{nh(X)}$ and $E^G_n$ is $(n,14\epsilon)$-separated.

To estimate $\mu(B_n(x,7\epsilon))$, we first estimate $\nu_m (f^{-k} B_n(x,7\epsilon))$ for $m \gg n$ and (most values of) $0\leq k\leq m$.  Let $\tau = \tau_M \in \NN$ be provided by the specification property.  Fix $\tau \leq k \leq m-n-\tau$; let $\ell_1 = k-\tau$ and $\ell_2 = m-k-\tau-n$, so that $\ell_1 + \tau + n + \tau + \ell_2 = m$.

Using specification, for each $y\in E^G_{\ell_1}$ and $z\in E^G_{\ell_2}$ there exists
\[
\ph(y,z) \in B_{\ell_1}(y,\epsilon) \cap f^{-\ell_1-\tau} B_n(x,\epsilon) \cap f^{-n-\ell_1-2\tau} B_{\ell_2}(z,\epsilon);
\]
furthermore, because $E^G_n$ is $(n,14\epsilon)$-separated, $\ph$ is injective and $\ph(E^G_{\ell_1} \times E^G_{\ell_2})$ is $(m,12\epsilon)$-separated.

Because the set $E_m$ used to construct $\mu$ is $(m,6\epsilon)$-spanning, we can define $\psi\colon  \ph(E^G_{\ell_1} \times E^G_{\ell_2}) \to E_m$ such that $d_m(w,\psi(w)) < 6\epsilon$, and hence $\psi\circ \ph \colon E^G_{\ell_1}\times E^G_{\ell_2} \to E_m$ is injective.  Furthermore, $\ph(y,z) \in f^{-k}B_n(x,\epsilon)$ for all $y,z$, and hence $\psi\circ\ph(y,z) \in f^{-k}B_n(x,7\epsilon)$.  Therefore
\[
\nu_{m} (f^{-k} B_n(x,7\epsilon)) \geq \frac{(\# E^G_{\ell_1})(\# E^G_{\ell_2})}{\# E_m}
\geq \frac{e^{(\ell_1 + \ell_2)h(X)}}{4C_1 e^{mh(X)}},
\]
where the bound on the denominator comes from Lemma~\ref{lem:sleq2}.  This holds for all $\tau \leq k\leq m-n-\tau$, and averaging over $0\leq k< m$ gives
\[
\mu_m(B_n(x,7\epsilon)) \geq \left(\frac{m-n-2\tau}{4C_1 m}\right) e^{-(2\tau + n)h(X)}.
\]
Sending $m\to\infty$ gives the result with $K_M = (4C_1)^{-1} e^{-2\tau_M h(X)}$.
\end{proof}

Later on, in the proof of ergodicity, we will need a very similar lemma that estimates the measure of a set of points whose trajectories are prescribed not on a single interval, but on two disjoint intervals.

\begin{lemma}\label{lem:Gibbsish}
For sufficiently large $M$, there exists $K_M'>0$ such that for every $(x_1,n_1), (x_2,n_2)\in \GGG^M$ and $q\geq 2\tau_M$, we have
\[
\mu\left(B_{n_1}(x_1,7\epsilon) \cap f^{-n_1-q} B_{n_2}(x_2, 7\epsilon)\right) \geq K_M' e^{-(n_1 + n_2) h(X)}.
\]
\end{lemma}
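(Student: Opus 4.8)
The plan is to mirror the proof of Lemma~\ref{lem:GibbsG}, replacing the single prescribed segment $(x,n)$ by the pair $(x_1,n_1),(x_2,n_2)$ separated by a gap of length $q$, which we realise using specification together with an inserted \emph{filler} block. Write $D := B_{n_1}(x_1,7\eps)\cap f^{-n_1-q}B_{n_2}(x_2,7\eps)$ for the target set. I would fix $M$ large enough that the conclusions of Lemma~\ref{lem:GibbsG} hold, set $\tau=\tau_M$, and retain the $(\ell,14\eps)$-separated sets $E^G_\ell\subset\GGG^M_\ell$ with $\#E^G_\ell\geq\tfrac12 e^{\ell h(X)}$ for every $\ell$. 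Put $N=n_1+q+n_2$ and $r=q-2\tau\geq 0$.

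First I would estimate $\nu_m(f^{-k}D)$ for $m\gg N$ and most $k$. For $\tau\leq k\leq m-N-\tau$, set $\ell_1=k-\tau$ and $\ell_2=m-k-\tau-N$, and apply specification for $\GGG^M$ to each list $(y,\ell_1),(x_1,n_1),(w,r),(x_2,n_2),(z,\ell_2)$ with $(y,w,z)\in E^G_{\ell_1}\times E^G_r\times E^G_{\ell_2}$. This yields $\ph(y,w,z)$ that $\eps$-shadows $y$ on $[0,\ell_1)$, $x_1$ on $[k,k+n_1)$, $w$ on the gap, $x_2$ on $[k+n_1+q,k+n_1+q+n_2)$, and $z$ on $[m-\ell_2,m)$; here the identity $r+2\tau=q$ is exactly what pins the gap between the two prescribed segments at $q$. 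In particular $f^k\ph(y,w,z)\in B_{n_1}(x_1,\eps)\cap f^{-n_1-q}B_{n_2}(x_2,\eps)$ for every $w$.

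The crucial point, and the place where this departs from a routine copy of Lemma~\ref{lem:GibbsG}, is that I let the filler $w$ range over the whole separated set $E^G_r$ rather than fixing one filler. Because $E^G_{\ell_1},E^G_r,E^G_{\ell_2}$ are $(\cdot,14\eps)$-separated and all three prescribed intervals lie in $[0,m)$, the map $\ph$ is injective and its image is $(m,12\eps)$-separated. As in Lemma~\ref{lem:GibbsG}, composing with a nearest point of the $(m,6\eps)$-spanning set $E_m$ (which enlarges the shadowing radius from $\eps$ to $7\eps$) embeds $E^G_{\ell_1}\times E^G_r\times E^G_{\ell_2}$ injectively into $E_m\cap f^{-k}D$, since $D$ places no constraint on the gap and so every filler $w$ is admissible. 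The key cancellation is $\ell_1+r+\ell_2=m-4\tau-n_1-n_2$, in which the $q$-dependence disappears: the factor $e^{rh(X)}$ gained from the filler set precisely offsets the longer total block. Thus
\[
\nu_m(f^{-k}D)\geq \frac{(\#E^G_{\ell_1})(\#E^G_r)(\#E^G_{\ell_2})}{\#E_m}\geq \frac{\tfrac18 e^{(\ell_1+r+\ell_2)h(X)}}{C_1 e^{mh(X)}}=\frac{1}{8C_1}e^{-(4\tau+n_1+n_2)h(X)},
\]
uniformly in $q$.

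Finally I would average over $\tau\leq k\leq m-N-\tau$, whose proportion among $0\leq k<m$ tends to $1$ as $m\to\infty$, and pass to the weak* limit exactly as in Lemma~\ref{lem:GibbsG}, giving the claim with $K_M'=(8C_1)^{-1}e^{-4\tau_M h(X)}$. The only genuinely new obstacle is the one flagged above: filling the gap with a single orbit segment produces a spurious factor $e^{-qh(X)}$ and a bound that degenerates as $q\to\infty$, so it is essential to sum over the separated family $E^G_r$ of fillers in order to obtain a constant $K_M'$ independent of $q$. (The degenerate case $q=2\tau$, where $r=0$, is covered by the convention that a length-zero filler block imposes no constraint while still contributing its two adjacent gluing gaps.)
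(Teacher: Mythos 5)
Your proposal is correct and follows essentially the same route as the paper's proof: the paper likewise inserts a filler block ranging over a $(\bar q,14\epsilon)$-separated set $E^G_{\bar q}\subset\GGG^M_{\bar q}$ with $\bar q=q-2\tau$, glues the five segments by specification, compares against the spanning set $E_m$ with $\#E_m\le C_1e^{mh(X)}$, and obtains the same constant $K_M'=(8C_1)^{-1}e^{-4\tau_M h(X)}$ after averaging over $k$. The cancellation $\ell_1+\bar q+\ell_2=m-4\tau-n_1-n_2$ that you flag as the key point is exactly the mechanism in the paper's computation, so your write-up is a faithful (and more explicitly justified) version of the published argument.
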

\begin{proof}
The proof closely resembles that of Lemma \ref{lem:GibbsG}.  Let $M$ be large enough to satisfy the conclusion of Lemma~\ref{lem:lotsinG2} with $\gamma=\frac12$ and let $\tau = \tau_M$.  As before, let $E^G_n\subset \GGG_n^M$ be $(n,14\epsilon)$-separated with $\#E^G_n\geq \frac 12 e^{nh(X)}$.

Given $m \gg n_1 + n_2 + q$ and $\tau \leq k \leq m-n_2-q-n_1-\tau$, let $\ell_1 = k-\tau$ and $\ell_2 = m-n_2-q-n_1-\tau-k$, so that
\[
m = \ell_1 + \tau + n_1 + \tau + \bar q + \tau + n_2 + \tau + \ell_2,
\]
where $\bar q = q-2\tau$.  A similar argument to the one in Lemma \ref{lem:GibbsG} shows that
\begin{align*}
\nu_m (f^{-k} B_{n_1}(x_1,7\epsilon) \cap f^{-k-n_1- q} B_{n_2}(x_2, 7\epsilon)) 
&\geq \frac{(\#E^G_{\ell_1}) (\#E^G_{\bar q}) (\#E^G_{\ell_2})}{\Lamb(X,m,6\epsilon)} \\
&\geq \frac{e^{\ell_1 + \bar q + \ell_2}}{8C_1 e^{mh(X)}}.
\end{align*}
Averaging over $0\leq k < m$ gives
\begin{multline*}
\mu_m(f^{-k} B_{n_1}(x_1,7\epsilon) \cap f^{-k-n_1- q} B_{n_2}(x_2, 7\epsilon)) \\
\geq \left( \frac{ m-n_1-q-n_1-2\tau}{8C_1m} \right) e^{-(4\tau + n_1 + n_2) h(X)},
\end{multline*}
and sending $m\to\infty$ gives the result with $K_M' = (8C_1)^{-1} e^{-4\tau_M h(X)}$.
\end{proof}

\subsection{Adapted partitions and positive measure sets for MMEs} \label{partition}

From now on we need the full strength of the hypotheses in Theorem~\ref{thm:main}.  In particular, we assume that every MME $\nu$ is almost positively expansive at scale $28\epsilon$. We recall that the definition of an adapted partition is given in \S\ref{expansive}.

\begin{lemma}\label{lem:posmeas}
For every $\gamma\in (0,1)$ there exists $C_\gamma>0$ such that if $\nu$ is an MME and $\DDD\subset X\times \NN$ satisfies $\nu(\DDD_n)\geq \gamma$ for every $n$, then $\DDD_n$ has non-empty intersection with at least $C_\gamma e^{nh(X)}$ atoms of $\AAA^n$ for any partition $\AAA^n$ adapted to a maximal $(n,14\epsilon)$-separated set.
\end{lemma}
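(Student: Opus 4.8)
The plan is to bound from below the number of atoms of $\AAA^n$ meeting $\DDD_n$ by an entropy argument that exploits the fact that $\AAA^n$ is a generating partition for $\nu$ under $f^n$. Write $\WWW = \{w \in \AAA^n \mid w \cap \DDD_n \neq \emptyset\}$ and $U = \bigcup_{w\in\WWW} w$. Since $\DDD_n \subset U$ we have $\nu(U) \ge \nu(\DDD_n) \ge \gamma$. The mechanism is that $\nu$, being an MME, carries the full entropy $h(X)$ under $f^n$, namely $nh(X)$, and this entropy cannot be squeezed into too few atoms; the set $U$ of measure at least $\gamma$ must therefore consist of exponentially many atoms.

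First I would establish the generating property. Under the full hypotheses of Theorem~\ref{thm:main}, every MME $\nu$ is almost positively expansive at scale $28\epsilon$, hence also at scale $14\epsilon$ because $\NE(14\epsilon) \subset \NE(28\epsilon)$. The atoms of $\AAA^n$ are contained in the Bowen balls $B_n(\cdot, 14\epsilon)$, so by Proposition~\ref{prop:generating} applied to $f^n$ with the metric $d_n$ (using that the non-expansive set of $f^n$ for $d_n$ coincides with that of $f$ for $d$), the partition $\AAA^n$ is generating for $\nu$ under $f^n$. Consequently $H_\nu(\AAA^n) \ge h_\nu(f^n, \AAA^n) = h_\nu(f^n) = n h_\nu(f) = n h(X)$, where the last equality is the defining property of an MME.

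Next I would split this entropy across $U$ and $U^c$. Conditioning on the two-set partition $\{U, U^c\}$ gives $H_\nu(\AAA^n) \le \log 2 + \nu(U)\log\#\WWW + \nu(U^c)\log\#\AAA^n$, since the atoms lying inside $U$ are precisely those of $\WWW$, while those inside $U^c$ number at most $\#\AAA^n$. To bound $\#\AAA^n = \Lamb(X, n, 14\epsilon)$ I would invoke Lemma~\ref{lem:sleq2} with $6\delta = 14\epsilon$; its standing hypothesis \eqref{eqn:heq} holds because Proposition~\ref{prop:entropy-appeared} together with $\hexp(f, 28\epsilon) < h(X)$ forces $h(X, 7\epsilon/3) = h(X, 28\epsilon) = h(X)$, which supplies a constant $C_1$ with $\#\AAA^n \le C_1 e^{n h(X)}$. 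Combining the displayed inequalities, inserting $n h(X) \le H_\nu(\AAA^n)$, and using $\nu(U) + \nu(U^c) = 1$ to cancel the $n h(X)$ terms yields $\nu(U)\, n h(X) \le \log 2 + \nu(U)\log\#\WWW + \nu(U^c)\log C_1$; dividing by $\nu(U) \ge \gamma$ gives $\log\#\WWW \ge n h(X) - (\log 2 + \log C_1)/\gamma$, i.e. the claim with $C_\gamma = (2C_1)^{-1/\gamma}$.

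The conceptual input is the generating property, which converts the maximality of $\nu$ into the entropy lower bound $H_\nu(\AAA^n) \ge n h(X)$; this is where the expansivity hypothesis is genuinely used, and one must take care to match the scale $14\epsilon$ of the atoms against the expansivity scale. The delicate point in the arithmetic is recovering the full exponential rate $e^{n h(X)}$ rather than the weaker $e^{\gamma n h(X)}$: this requires keeping the term $\nu(U)\, n h(X)$ intact and exploiting $\nu(U^c) = 1 - \nu(U)$, rather than bounding $\nu(U^c) \le 1 - \gamma$ prematurely, which would waste the entropy available on the complement and lose the sharp rate.
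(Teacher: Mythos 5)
Your proposal is correct and follows essentially the same route as the paper's proof: expansivity at scale $28\epsilon$ makes $\AAA^n$ generating for $f^n$, so $nh(X) = h_\nu(f^n) \leq H_\nu(\AAA^n)$, which is then split over the atoms meeting $\DDD_n$ and their complement, with the complement count bounded by $C_1 e^{nh(X)}$ via Lemma~\ref{lem:sleq2}, and the arithmetic keeping $\nu(U)\,nh(X)$ intact to preserve the full exponential rate. Your conditioning on $\{U, U^c\}$ is literally the paper's normalization of the two sums plus the $H(\nu(\DDD_n)) \leq \log 2$ bound, and your scale bookkeeping ($\delta = 7\epsilon/3$, $12\delta = 28\epsilon$, monotonicity of $\NE$) matches what the paper does implicitly.
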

\begin{proof}
Because $\nu$ is almost positively expansive, we see that for $\nu$-a.e.\ $x\in X$ the following holds:  for all $y\neq x$ there exists $m$ such that $d(f^m y, f^m x) \geq 28\epsilon$.  Writing $m = qn + r$ for $q,r\in \NN$, $r<n$, we see that $d_n((f^n)^q y, (f^n)^q x) \geq 28\epsilon$, and it follows that $\AAA^n$ is generating for $f^n$.  In particular, this implies that
\begin{align*}
h_\nu(f^n) &= h_\nu(f^n,\AAA^n) = \inf_{q\geq 1} \frac 1q H_\nu(\AAA^n\vee (f^n)^{-1} \AAA^n \vee \cdots \vee (f^n)^{-q+1} \AAA^n) \\
& \leq H_\nu(\AAA^n) = \sum_{w\in \AAA^n} -\nu(w) \log \nu(w).
\end{align*}
Without loss of generality, we assume that $\DDD_n$ is a union of atoms of $\AAA^n$, and write this collection of atoms as $\WWW_n$.  Writing $\WWW_n^c = \AAA^n \setminus \WWW_n$ and $\DDD_n^c = X\setminus \DDD_n$, we recall that $h_\nu(f^n) = nh_\nu(f) = nh(X)$ and obtain
\[
nh(X) \leq \sum_{w\in \WWW_n} -\nu(w) \log \nu(w) + \sum_{w\in \WWW_n^c} -\nu(w) \log \nu(w).
\]
Normalising each sum yields
\begin{equation}\label{eqn:nhleq}
\begin{aligned}
nh(X) &\leq \nu(\DDD_n) \left(\sum_{w\in \WWW_n} -\frac{\nu(w)}{\nu(\DDD_n)} \log \left(\frac{\nu(w)}{\nu(\DDD_n)}\right) \right) \\
&\qquad +\nu(\DDD_n^c) \left(\sum_{w\in \WWW_n^c} -\frac{\nu(w)}{\nu(\DDD_n^c)} \log \left(\frac{\nu(w)}{\nu(\DDD_n^c)}\right)\right) \\
&\qquad + (-\nu(\DDD_n)\log \nu(\DDD_n) - \nu(\DDD_n^c) \log \nu(\DDD_n^c)).
\end{aligned}
\end{equation}
Recall that for any non-negative numbers $a_1,\dots a_k$ summing to $1$, we have
\[
\sum_{i=1}^k -a_i \log a_i \leq \log k.
\]
Applying this to the first sum in~\eqref{eqn:nhleq} with the quantities $a_i$ replaced by $\frac{\nu(w_x)}{\nu(\DDD_n)}$, and to the second sum with $a_i$ replaced by $\frac{\nu(w_x)}{\nu(\DDD_n^c)}$, we see that
\[
nh(X) \leq \nu(\DDD_n) \log \#\WWW_n + \nu(\DDD_n^c) \log \#(\WWW_n^c) + H(\nu(\DDD_n)),
\]
where we write $H(t) = -t\log t - (1-t) \log (1-t)$.  Lemma~\ref{lem:sleq2} implies that $\#(\WWW_n^c) \leq \#S_n \leq C_1 e^{nh(X)}$, and so writing $C_2 = \log C_1 + \max_{t\in[0,1]} H(t)$, we have
\begin{align*}
nh(X) &\leq \nu(\DDD_n) \log \#\WWW_n + (1-\nu(\DDD_n)) (\log C_1 + nh(X)) + H(\nu(\DDD_n)) \\
&= \nu(\DDD_n) \log \#\WWW_n + nh(X) - \nu(\DDD_n) (\log C_1 + nh(X))+ C_2,
\end{align*}
which gives
\begin{align*}
\nu(\DDD_n) \log \#\WWW_n &\geq \nu(\DDD_n) (\log C_1 + nh(X)) - C_2, \\
\log \#\WWW_n &\geq \log C_1 + nh(X) - \frac {C_2}{\gamma}.
\end{align*}
Thus it suffices to take $C_\gamma = C_1 e^{-C_2/\gamma}$.
\end{proof}

\subsection{Ergodicity}
Now we consider the MME constructed in \S\ref{sec:Gibbs}, and show that it is ergodic.  This is a consequence of the following proposition.

\begin{proposition}\label{prop:reallyweakmix}
If two measurable sets $P, Q \subset X$ both have positive $\mu$-measure, then $\llim_{n\to\infty} \mu(P \cap \sigma^{-n}(Q)) > 0$.
\end{proposition}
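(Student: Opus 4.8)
The plan is to derive this mixing-type estimate from a \emph{quasi-independence} property of good Bowen balls, with Lemma~\ref{lem:Gibbsish} supplying the crucial lower bound and almost positive expansivity (at scale $28\epsilon$) used to replace the arbitrary measurable sets $P,Q$ by nearly disjoint unions of such balls. Fix $M$ large enough for Lemmas~\ref{lem:GibbsG} and~\ref{lem:Gibbsish}, write $\tau=\tau_M$, and set $\gamma=\min(\mu(P),\mu(Q))>0$. The key structural observation is that the liminf over all large $N$ can be accessed through a \emph{single} length scale: given $N$, I write $N=n+q$ with $n$ fixed (large) and gap $q=N-n$, so that ranging over $q\geq 2\tau$ covers every $N\geq n+2\tau$. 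Thus it suffices to produce, for one well-chosen $n$, a positive lower bound for $\mu(P\cap f^{-N}Q)$ that is uniform in $q\geq 2\tau$.

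Next I would carry out the density/filling step. Since $\mu$ is almost positively expansive at scale $28\epsilon$, Proposition~\ref{prop:generating} shows that any partition whose atoms are contained in Bowen balls of radius $14\epsilon$ generates; its refinements therefore generate the Borel $\sigma$-algebra mod $\mu$, and the increasing-$\sigma$-algebra (martingale) theorem makes $\mu$-a.e.\ point of $P$ a density point, and likewise for $Q$. Fixing a small defect $\eta>0$, I can then choose $n$ large so that, off a set of small measure, the good Bowen balls $B_n(x,7\epsilon)$ with $(x,n)\in\GGG^M$ are filled by $P$ (resp.\ by $Q$) up to relative error $\eta$. Applying Lemma~\ref{lem:posmeas} to $\DDD_n=P\cap\GGG_n^M$ (and to $Q\cap\GGG_n^M$) produces at least $C_{\gamma/2}e^{nh(X)}$ atoms of an adapted partition meeting these sets, and after passing to a separated subfamily I obtain two disjoint collections $\{B_n(y_i,7\epsilon)\}_i$ and $\{B_n(y'_j,7\epsilon)\}_j$ of good, well-filled balls, each of cardinality $\gtrsim e^{nh(X)}$, with each ball of $\mu$-measure $\geq K_M e^{-nh(X)}$ by Lemma~\ref{lem:GibbsG}.

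The assembly step then sums Lemma~\ref{lem:Gibbsish} over pairs. Setting $F_{ij}=B_n(y_i,7\epsilon)\cap f^{-N}B_n(y'_j,7\epsilon)$ with $N=n+q$, the balls being disjoint in each block makes the $F_{ij}$ pairwise disjoint, while Lemma~\ref{lem:Gibbsish} gives $\mu(F_{ij})\geq K'_M e^{-2nh(X)}$; hence the main term $\sum_{i,j}\mu(F_{ij})\geq (\#i)(\#j)K'_M e^{-2nh(X)}$ is bounded below by a constant $c_0>0$ that is \emph{independent of $n$ and $q$}, the two exponential factors cancelling. To pass from $F_{ij}$ into $P\cap f^{-N}Q$ I subtract the error: for fixed $i$ the sets $F_{ij}$ lie disjointly inside $B_n(y_i,7\epsilon)$, so $\sum_j\mu(F_{ij}\cap P^c)\leq\eta\,\mu(B_n(y_i,7\epsilon))$, and summing over the disjoint $\{B_n(y_i,7\epsilon)\}_i$ yields total defect $\leq\eta$; the symmetric estimate for $Q$, using $f$-invariance of $\mu$, gives another $\leq\eta$. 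Therefore $\mu(P\cap f^{-N}Q)\geq c_0-2\eta$, and choosing $n$ large enough that $\eta<c_0/4$ forces $\mu(P\cap f^{-N}Q)\geq c_0/2$ for all $N\geq n+2\tau$.

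The main obstacle is the interaction between the three scales and the conversion of ``the orbit shadows a point of $P$'' into ``the orbit lies in $P$''. The Gibbs bounds live at radius $7\epsilon$, adapted partitions at $14\epsilon$, and generation/expansivity only at $28\epsilon$, and the density argument naturally controls filling of \emph{partition atoms} rather than of the genuine Bowen balls appearing in Lemma~\ref{lem:Gibbsish}; reconciling these inclusions (the atoms are sandwiched above by a Bowen ball about the point but not cleanly below) is exactly where the enlarged expansivity scale is spent. The delicate point is the exponential balance: the main term is a product of an entropy \emph{count} $\sim e^{nh(X)}$ against a Gibbs weight $\sim e^{-nh(X)}$ per block, and it survives only because the disjointness of blocks bounds the $P^c$- and $Q^c$-errors by the filling defect $\eta$ alone, with no dependence on the (exponentially large) number of blocks. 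Getting this bookkeeping right --- in particular avoiding any appeal to an upper Gibbs bound on $\mu(B_n(x,7\epsilon))$, which is not available in this weak-specification setting --- is the crux of the argument.
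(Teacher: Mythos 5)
Your overall architecture is the same as the paper's — fix one block length $n$, let the gap $q\geq 2\tau_M$ vary, produce exponentially many disjoint good blocks mostly lying in $P$ (resp.\ $Q$), apply Lemma~\ref{lem:Gibbsish} pairwise, and keep the error bookkeeping independent of the (exponentially large) number of blocks — and your assembly step at the end is sound. The gaps are in the step that manufactures the blocks. First, the density/filling claim is unjustified: the adapted partitions $\AAA^n$ are \emph{not} nested ($\AAA^{n+1}$ does not refine $\AAA^n$), so the $\sigma$-algebras $\sigma(\AAA^n)$ form no filtration and the increasing-martingale theorem does not apply; nor do Bowen balls $B_n(\cdot,7\epsilon)$ form a differentiation basis for an arbitrary invariant measure, so there is no Lebesgue density theorem yielding balls filled by $P$ up to relative error $\eta$. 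The paper deliberately proves only the much weaker Lemma~\ref{lem:standard-argument}: $P$ and $Q$ are approximated by unions $U,V$ of atoms of $\AAA^n$ with $\mu(U\symdiff P)<\delta$ and $\mu(V\symdiff Q)<\delta$. This weaker, union-level approximation suffices, because the error can also be bookkept at the level of unions: the good blocks are taken to be whole atoms, $U'\cap f^{-m}V'\subset U\cap f^{-m}V$, and $\mu\bigl((U\cap f^{-m}V)\setminus(P\cap f^{-m}Q)\bigr)\leq \mu(U\setminus P)+\mu(V\setminus Q)<2\delta$ by invariance of $\mu$ — exactly the ``error independent of the number of blocks'' feature you identified, but obtained with no per-ball density statement.

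Second, you invoke Lemma~\ref{lem:posmeas} with $\DDD_n=P\cap\GGG_n^M$, which requires $\mu(P\cap\GGG_n^M)\geq\gamma/2$ uniformly in $n$; no lower bound on $\mu(\GGG_n^M)$ is available at this stage of the argument (the remark in Section 2 that $\inf_n\mu(\GGG_n^M)\to 1$ is an a posteriori observation, and without an upper Gibbs bound the counting statement of Lemma~\ref{lem:lotsinG} cannot be converted into a measure bound). Moreover, what Lemma~\ref{lem:Gibbsish} actually needs is that the \emph{centres} of your balls lie in $\GGG^M$: atoms merely meeting $P\cap\GGG_n^M$ supply points $z\in\GGG_n^M$ whose balls $B_n(z,7\epsilon)$ need not be disjoint across distinct atoms, and extracting an $(n,14\epsilon)$-separated subfamily from such $z$'s can cost far more than a constant factor. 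The paper's route sidesteps both problems simultaneously: apply Lemma~\ref{lem:posmeas} to the union $U$ (whose measure exceeds $\gamma$) to get $\#U\geq C_\gamma e^{nh(X)}$, then apply the purely combinatorial Lemma~\ref{lem:lotsinG} to the $(n,14\epsilon)$-separated set of atom centres to conclude that at least half of the atoms of $U$ have centre $x$ with $(x,n)\in\GGG^M$; these atoms are pairwise disjoint by definition and each contains the ball $B_n(x,7\epsilon)$ fed into Lemma~\ref{lem:Gibbsish}. Substituting these two steps (union approximation plus centre counting) for your density step repairs the proof; everything downstream goes through essentially as you wrote it.
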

\begin{proof}
In fact, we show that for every $\gamma>0$ there exists $\alpha>0$ such that if $\min \{ \mu(P), \mu(Q) \} \geq 2\gamma$, then
\[
\llim_{n\to\infty} \mu(P \cap \sigma^{-n}(Q)) \geq \alpha.
\]
As in \S \ref{partition}, we take a sequence of partitions $\AAA^n$, each of which is adapted to a maximal $(n, 14\epsilon)$-separated set  $S_n$. Recall that each $x \in S_n$ is identified with a partition element $w_x \in \AAA^n$ such that
\[
B_n(x, 7 \eps) \subset w_x.
\]
Fix $\delta \in (0,\gamma)$.  Because $\Phi_x(28\epsilon)=\{x\}$ for $\mu$-a.e.\ $x\in X$, we can assume WLOG that this holds for every $x\in P,Q$.  Thus $\bigcap_n B_n(x,14\epsilon) = \{x\}$ for all $x\in P,Q$.

In what follows, to simplify notation, we sometimes use the same symbol (such as $U$) to denote both a set of partition elements ($U\subset \AAA^n$) and the union of those partition elements ($U\subset X$).  


\begin{lemma}\label{lem:standard-argument}
Let $\mu$ be a finite Borel measure on a compact metric space $X$ and let $P\subset X$ be measurable.  Suppose $\AAA^n$ is a sequence of partitions such that $\bigcap_n \AAA^n(x) = \{x\}$ for every $x\in P$, and let $\delta>0$.  Then for all sufficiently large $n$ there exists a collection $U \subset \AAA^n$ such that $\mu(U\symdiff P) < \delta$.
\end{lemma}
\begin{proof}
This is a standard argument.  Let $R\subset P$ and $S \subset X\setminus P$ be compact such that $\mu(X \setminus (R \cup S)) < \delta/2$.  Now let $\eta>0$ be such that $d(x,y) > \eta$ for all $x\in R$ and $y\in S$.  For all sufficiently large $n$ there exists $R' \subset R$ such that $\mu(R\setminus R') < \delta/2$ and $\diam \AAA^n(x) < \eta$ for all $x\in R'$.   

Let $U = \{ w \in \AAA^n \mid w\cap R' \neq \emptyset \}$.  Then $R' \subset U \subset X\setminus S$, and the result follows since $U\symdiff P \subset \big(X \setminus (R\cup S)\big) \cup (R \setminus R')$.
\end{proof}

Returning to the proof of Proposition~\ref{prop:reallyweakmix}, we see that Lemma~\ref{lem:standard-argument} allows us to choose, for all sufficiently large $n$, collections $U, V\subset \AAA^n$ such that
\[
\mu (U \symdiff P) < \delta \mbox{ and } \mu (V \symdiff Q) < \delta.
\]
In particular $\mu(U) > \mu (P) - \delta > \gamma$, and $\mu(V) > \mu (Q) - \delta > \gamma$.  Thus, by Lemma \ref{lem:posmeas}, we have
\[
\# U \geq C_\gamma e^{nh(X)} \mbox{ and } \# V \geq C_\gamma e^{nh(X)}.
\]
Let $M$ be such that Lemma~\ref{lem:lotsinG} holds with $\gamma_1 = C_\gamma$ and $\gamma_2 = \frac 12$; write $U' = \{ w_x \in U \mid x\in \GGG^M\}$ and similarly for $V'$.  Then
\begin{equation}\label{eqn:U'V'}
\# U' \geq \frac 12 C_\gamma e^{nh(X)} \mbox{ and } \# V' \geq \frac 12  C_\gamma e^{nh(X)}.
\end{equation}
For every $w_{x_1}\in U'$ and $w_{x_2}\in V'$ and all $m\geq 2\tau + n$, Lemma~\ref{lem:Gibbsish} implies that
\[
\mu(w_{x_1} \cap f^{-m} w_{x_2}) \geq \mu(B_n(x_1,7\epsilon) \cap f^{-m}B_n(x_2,7\epsilon)) 
\geq K_M' e^{-2nh(X)}.
\]
It follows that
\[
\mu(U' \cap f^{-m}V') \geq \left(\frac 14 C_\gamma^2 e^{2nh(X)}\right) (K_M' e^{-2nh(X)}) = \frac{C_\gamma^2 K_M'}4 =:\alpha.
\]
Now since $U\supset U'$ and $V\supset V'$, and since for any $m$,
\[
|\mu(U\cap \sigma^{-m}V) - \mu(P\cap \sigma^{-m}Q)| < \delta,
\]
we have
\[
\llim_{m \rightarrow \infty} \mu(P\cap \sigma^{-m}Q) \geq \llim_{m \rightarrow \infty} \mu(U\cap \sigma^{-m}V) -\delta \geq \alpha - \delta.
\]
Furthermore, since $\delta>0$ was arbitrary, we conclude that
\[
\llim_{m \rightarrow \infty} \mu(P\cap \sigma^{-m}Q) \geq \alpha.\qedhere
\]
\end{proof}

\subsection{Contradiction if there is another mme}

Let $\mu$ be the ergodic MME constructed in the previous sections, and suppose that there exists another ergodic MME $\nu\perp\mu$.  Let $P\subset X$ be such that $\mu(P) = 0$ and $\nu(P)=1$, and let $\AAA^n$ be a sequence of partitions adapted to maximal $(n,14\epsilon)$-separated sets $S_n$.  By Lemma~\ref{lem:standard-argument}, there exists $U_n\subset \AAA^n$ such that $(\mu + \nu)(U_n \symdiff P) \to 0$.  In particular, we have $\nu(U_n) \to 1$ and $\mu(U_n) \to 0$.

Now $\gamma := \inf_n\nu(U_n) > 0$, and so by Lemma~\ref{lem:posmeas}, we have $\#U_n\geq C_\gamma e^{nh(X)}$ for all $n$.  As in the proof of Proposition~\ref{prop:reallyweakmix}, fix $M$ and let $U_n' =\{ w_x \in U_n \mid x\in \GGG^M\}$; by Lemma~\ref{lem:lotsinG}, there exists $M$ such that $\#U_n' \geq \frac 12 C_\gamma e^{nh(X)}$ for all $n$.

Finally, we use the Gibbs property in Lemma~\ref{lem:GibbsG} to observe that
\[
\mu(U_n) \geq \mu(U_n') \geq (\#U_n') K_M e^{-nh(X)} \geq \frac 12 C_\gamma K_M > 0,
\]
which contradicts the fact that $\mu(U_n) \to 0$.  This contradiction implies that any MME $\nu$ is absolutely continuous with respect to $\mu$, and since $\mu$ is ergodic, this in turn implies that $\nu=\mu$, which completes the proof of the theorem.

\section{Application to factors}
We prove that every positively expansive topological factor of any $\beta$-shift has a unique measure of maximal entropy. The key point is that a decomposition $(\PPP, \GGG, \SSS)$  for a system $(X, f)$ induces a decomposition for a factor $(Y, g)$ in a natural way. We demonstrate this phenomenon, which was worked out in the symbolic case in~\cite{beta-factor}, in the special case of factors of the $\beta$-shift.

For a more detailed description of the structure of the $\beta$-shift $(X,\sigma)$, we refer to~\cite{beta-factor} and the references therein; here we only state the properties  we need for the proof.  The key is that there exists a distinguished sequence $w\in \Sigma^+ := \{0,1,\dots,b\}^\NN$ (the $\beta$-expansion of $1$) such that $z\in \Sigma^+$ is in $X$ if and only if $\sigma^n z\preceq w$ for all $n\geq 0$, where $\preceq$ denotes the lexicographic order. 

The language $\LLL$ of the $\beta$-shift is the collection of all finite words that appear in sequences $z\in X$.  We recall the decomposition of $\LLL$ given in~\cite{beta-factor}:  $\tilde\CCC^s = \{w_1 \cdots w_n \mid n\geq 1\}$ is the collection of prefixes of $w$, and $\tilde\GGG$ is the collection of words in $\LLL$ that do not end with any prefix of $w$.  That is,
\[
\tilde\GGG = \{ v_1 \dots v_m \in \LLL \mid v_{m-n+1} \cdots v_{m-1} v_m \neq w_1 \cdots w_n \text{ for all } 1\leq n\leq m \}.
\]
Every word $v\in \LLL$ can be decomposed as $v=v_1 \cdots v_m w_1 \cdots w_n$, where $v_1 \cdots v_m \in \tilde\GGG$.  We will use the following properties of words in $\tilde\GGG$, which are easily seen from the order-theoretic characterisation above or from the graph presentation of $X$ described in \cite{beta-factor}.
\begin{enumerate}
\item \label{append} Given any $v\in \LLL$, the word $v0^k$ is in $\tilde\GGG$ for all sufficiently large $k$.
\item \label{exact} For any $u \in \tilde\GGG$ and any $v \in \LLL$, the 
word $uv \in\LLL$. 
\end{enumerate}


Let $(Y, g)$ be a topological factor of $(X, \sigma)$, and let $\pi\colon X \mapsto Y$ be the factor map.

\begin{lemma}
$(Y,g)$ has positive topological entropy or $Y$ is a single point.
\end{lemma}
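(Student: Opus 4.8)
The plan is to prove the dichotomy by showing that if $Y$ is not a single point then $\htop(g)>0$; the statement is then immediate. So suppose $y_0\neq y_1$ in $Y$, choose $x_0,x_1\in X$ with $\pi(x_i)=y_i$, and write $3\eta:=d_Y(y_0,y_1)>0$, where $d_Y$ denotes the metric on $Y$. Since $\pi$ is continuous on the compact space $X$ it is uniformly continuous, and since the metric on $X\subset\Sigma^+$ is compatible with the product topology, I would first extract an $N\in\NN$ such that whenever two points of $X$ agree in their first $N$ coordinates, their $\pi$-images lie within $\eta$ of each other. Let $u_0,u_1\in\LLL$ be the length-$N$ prefixes of $x_0$ and $x_1$; note $u_0\neq u_1$, since agreement on the first $N$ coordinates would force $d_Y(y_0,y_1)<\eta$.

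Next I would manufacture two freely concatenable blocks. By property~\eqref{append} there is a $k$ with both $a:=u_00^k$ and $b:=u_10^k$ in $\tilde\GGG$; set $L:=N+k$, so that $a,b$ are distinct words of length $L$ lying in $\tilde\GGG$. The key structural input is property~\eqref{exact}: if $c_1,\dots,c_m\in\tilde\GGG$, then a downward induction on $j$ (using that $c_j\in\tilde\GGG$ while $c_{j+1}\cdots c_m\in\LLL$) gives $c_1\cdots c_m\in\LLL$. Hence for every binary string $s=(s_1,\dots,s_m)\in\{0,1\}^m$ the concatenation $c(s):=c_{s_1}\cdots c_{s_m}$, where I set $c_0=a$ and $c_1=b$, lies in $\LLL$ and so extends to a point $z_s\in X$ whose initial block is $c(s)$.

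The final step is to check that the images $\{\pi(z_s)\mid s\in\{0,1\}^m\}$ form an $(mL,\eta)$-separated set in $Y$. Given $s\neq s'$, pick $j$ with $s_j\neq s'_j$, say $s_j=0$ and $s'_j=1$; then $\sigma^{(j-1)L}z_s$ begins with $u_0$ and $\sigma^{(j-1)L}z_{s'}$ begins with $u_1$, so they agree with $x_0$ and $x_1$ respectively in their first $N$ coordinates. Using $g^i\circ\pi=\pi\circ\sigma^i$ together with the choice of $N$ yields $d_Y(g^{(j-1)L}\pi z_s,y_0)<\eta$ and $d_Y(g^{(j-1)L}\pi z_{s'},y_1)<\eta$, whence $d_Y(g^{(j-1)L}\pi z_s,g^{(j-1)L}\pi z_{s'})>3\eta-\eta-\eta=\eta$. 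Since $(j-1)L<mL$, the two images are $(mL,\eta)$-separated, so $\Lamb(Y,mL,\eta)\geq 2^m$ for every $m$, giving $\htop(g)\geq\uh(Y,\eta)\geq(\log 2)/L>0$. I expect the only delicate points to be the bookkeeping in the concatenation induction from property~\eqref{exact} and the translation of uniform continuity of $\pi$ into the combinatorial statement about prefixes of length $N$; neither is a genuine obstacle, so the argument is essentially a clean coding estimate that embeds a full $2$-shift into $Y$ at a fixed time scale.
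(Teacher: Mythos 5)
Your proof is correct and follows essentially the same route as the paper's: both arguments use property~\eqref{append} to turn prefixes of two preimages into distinct freely concatenable words in $\tilde\GGG$ of a common length, property~\eqref{exact} to concatenate them arbitrarily, and uniform continuity of $\pi$ to conclude that the images of the resulting $2^m$ points form an $(mL,\eta)$-separated set in $Y$. The only differences are cosmetic (you phrase uniform continuity via agreement of the first $N$ coordinates rather than via $\delta$-balls, and you spell out the induction behind the concatenation step, which the paper leaves implicit).
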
 
\begin{proof}
Assume that $Y$ is not a single point. Then there exists $x,y, \delta_0$ so that $d(x,y) = 3 \delta_0$ (where $d$ is the metric on $Y$). By uniform continuity of $\pi$, there exists $\delta$ so that $\pi(B(z, \delta)) \subset B(\pi z, \delta_0)$ for all $z \in X$.  Fix $\tilde x \in \pi^{-1}(x)$ and $\tilde y\in \pi^{-1}(y)$.  

For sufficiently large $n$, there are words $v_1,v_2\in \tilde\GGG$ of length $n$ such that $[v_1] \subset B(\tilde x, \delta)$ and $[v_2] \subset B(\tilde y, \delta)$.  To see this, take prefixes of $\tilde x$ and $\tilde y$ with sufficient length that the first condition holds, then append the symbol $0$ to these prefixes enough times that the resulting words have the same length and are both in $\tilde\GGG$, using property \eqref{append}.


Now, for any $\underline i = i_1 \ldots i_m \in \{1, 2\}^m$, we can choose $y_{\underline i} \in [v_{i_1} v_{i_2} \ldots v_{i_m}]$ using property \eqref{exact}, and define $x_{\underline i} = \pi y_{\underline i}$. The points $\{x_{\underline i}\}$ form a $(nm, \delta_0)$-separated set in $Y$. Thus $\Lamb(Y, nm, \delta_0) \geq 2^m$ for every $m$. This shows that the topological entropy of $(Y, g)$ is positive.
\end{proof}

\begin{proposition}\label{prop:factor-spec}
$\hspec(f,\epsilon) = 0$ for every $\epsilon>0$.
\end{proposition}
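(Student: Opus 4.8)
The plan is to exhibit, for each $\epsilon>0$, a single decomposition of the factor system $(Y,g)$ whose obstruction entropy $\uh(\PPP\cup\SSS,3\epsilon)$ vanishes; since $\hspec\geq 0$ always, Definition~\ref{def:obstructions} then forces $\hspec(g,\epsilon)=0$. (If $Y$ is a single point the claim is trivial, so I assume it is not.) I would build the decomposition by lifting the symbolic one: set $\PPP=Y\times\{0\}$, and for $(y,n)$ choose $\tilde y\in\pi^{-1}(y)$, write the word $(\tilde y)_{[0,n)}=u\cdot p$ with $u\in\tilde\GGG$ and $p$ a prefix of $w$ (using the decomposition of $\LLL$ recalled above), and declare $(y,|u|)\in\GGG$ and $(g^{|u|}y,|p|)\in\SSS$. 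This is a genuine decomposition in the sense of Definition~\ref{def:decomp}, with trivial prefixes, so that $(y,n)\in\GGG^M$ exactly when the suffix length $|p|$ is at most $M$.

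For the obstruction entropy I would show that $\SSS$ grows subexponentially at every scale. By uniform continuity of $\pi$, fix $\delta>0$ with $d_X(z,z')<\delta\Rightarrow d_Y(\pi z,\pi z')<3\epsilon$. Since $\SSS_s\subset\pi([w_1\cdots w_s])$, any $(s,3\epsilon)$-separated subset of $\SSS_s$ lifts to an $(s,\delta)$-separated subset of the single length-$s$ cylinder $[w_1\cdots w_s]\subset X$. But on such a cylinder the Bowen metric $d_s$ only distinguishes coordinates in a window of size $O(\log(1/\delta))$ beyond the cylinder, so the cylinder carries at most a constant $C(\delta)$ pairwise $(s,\delta)$-separated points. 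Hence $\Lamb(\SSS,s,3\epsilon)\leq C(\delta)$ for all $s$, giving $\uh(\SSS,3\epsilon)=0$; as $\PPP$ contains only length-$0$ segments it contributes nothing, so $\uh(\PPP\cup\SSS,3\epsilon)=0$.

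The substance is showing that each level $\GGG^M$ has specification at scale $\epsilon$. Gluing inside the language is free: by property~\eqref{exact} a word in $\tilde\GGG$ may be followed by anything, so a concatenation of $\tilde\GGG$-words lies in $\LLL$ with no gap. The real difficulty is the change of scale induced by $\pi$: uniform continuity turns the target scale $\epsilon$ in $Y$ into a demand for agreement on $N=N(\epsilon)$ coordinates in $X$, so any point that $\epsilon$-shadows a segment $(y_j,n_j)$ in $Y$ must agree with the chosen lift $\tilde y_j$ on the longer block $(\tilde y_j)_{[0,n_j+N)}$. I would absorb these extra coordinates into a uniform gap by appending enough $0$'s after each block to re-enter $\tilde\GGG$, and then glue the resulting good blocks by~\eqref{exact} and project through $\pi$. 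The key point making the gap uniform is that, because the core $u_j$ of each segment lies in $\tilde\GGG$, any suffix of $(\tilde y_j)_{[0,n_j+N)}$ coinciding with a prefix of $w$ has length at most $M+N$ (it cannot reach back into the good core without $u_j$ itself ending in a prefix of $w$); since appending $0$'s keeps a word in $\LLL$ and, once $\tilde\GGG$ is attained, preserves membership in $\tilde\GGG$ (as $w_1\neq 0$), the number of padding zeros is bounded by a constant $C_M$ depending only on $M$, $\epsilon$, and $w$. This yields specification of $\GGG^M$ at scale $\epsilon$ with gap $\tau_M=N+C_M$.

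The main obstacle is precisely this last uniformity, and the most delicate case is when the $\beta$-expansion $w$ is eventually $0$ (the sofic $\beta$-shifts): a block can then become \emph{trapped}, ending in a prefix $w_1\cdots w_m0^k$ that no finite string of $0$'s escapes and after which no nonzero symbol may legally be appended. I would treat this case separately, either by exploiting that such $\beta$-shifts are mixing shifts of finite type, where the relevant combinatorics is classical, or by choosing the lift $\tilde y_j$ so as to avoid the trapped suffixes. With specification of every $\GGG^M$ at scale $\epsilon$ in hand and $\uh(\PPP\cup\SSS,3\epsilon)=0$ established, Definition~\ref{def:obstructions} gives $\hspec(g,\epsilon)=0$ for every $\epsilon>0$.
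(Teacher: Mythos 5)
Your proposal is correct, and its architecture coincides with the paper's: the same decomposition of the factor (trivial prefixes, $\GGG$ coming from projections of itineraries in $\tilde\GGG$, $\SSS$ from projections of prefixes of $w$), the same splitting of the task into proving $\uh(\PPP\cup\SSS,3\epsilon)=0$ and proving specification of each $\GGG^M$ at scale $\epsilon$, and the same use of uniform continuity of $\pi$ to convert $\epsilon$-shadowing in $Y$ into agreement on $N(\epsilon)$ extra symbolic coordinates. The differences lie in the sub-arguments, and they are worth recording. For the entropy of $\SSS$, you bound separated sets inside the single cylinder $[w_1\cdots w_s]$ directly, while the paper instead notes $\SSS_{n+k}\subset B_n(\pi(w),\epsilon)$ and bounds by a constant; these are equivalent. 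The substantial difference is the specification of $\GGG^M$: the paper simply invokes the fact, established in the symbolic setting of~\cite{beta-factor}, that $\tilde\GGG^M$ has specification with some gap $\tau_M$, and only adds the change-of-scale layer, whereas you re-prove the symbolic fact by zero-padding, and your uniformity argument is the correct one --- any suffix of the extended word $(\tilde y_j)_{[0,n_j+N)}$ that is a prefix of $w$ has length at most $M+N$ (otherwise the good core would itself end in a prefix of $w$), so the number of padding zeros is bounded by the longest run of zeros in $w$ starting within the first $M+N+1$ positions, and once a word lies in $\tilde\GGG$ appending further zeros keeps it there since $w_1\neq 0$. This makes your proof self-contained where the paper's is not. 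One remark: the ``delicate case'' you set aside, where $w$ is eventually $0$, never occurs, because the lexicographic characterization $z\in X \Leftrightarrow \sigma^n z\preceq w$ for all $n$ (as stated in the paper) forces $w$ to be the quasi-greedy expansion of $1$, which never terminates in $0^\infty$; hence all runs of zeros in $w$ are finite and no separate treatment is needed. (Also, the $\hspec(f,\epsilon)$ in the statement is a slip for $\hspec(g,\epsilon)$, the obstruction entropy of the factor, which is how you correctly read it.)
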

\begin{proof}
We obtain a decomposition $(\PPP, \GGG, \SSS)$ for $(Y,g)$ from the decomposition $\LLL = \tilde\GGG \tilde\CCC^s$, then show that $\GGG^M$ has specification for every $\epsilon>0$.  (Of course the transition time $\tau$ depends on $\epsilon$ and $M$.)  let
\begin{align*}
\GGG &= \{ (x, n) \mid x = \pi \tilde x \mbox{ for some } \tilde x \in X \mbox{ such that } \tilde x_1 \ldots \tilde x_n \in \tilde\GGG \}, \\
\SSS &= \{ (x, n) \mid x = \pi \tilde x \mbox{ for some } \tilde x \in X \mbox{ such that } \tilde x_1 \ldots \tilde x_n \in \tilde\CCC^s \}.
\end{align*}
Setting $\PPP = \emptyset$, this defines a decomposition for $Y$. 

\begin{lemma}\label{lem:factor-spec}
$\GGG^M$ has the specification property for all $\epsilon>0$ and $M\in \NN$.
\end{lemma}
\begin{proof}
Fix $\epsilon$ and take $\delta$ so that $\pi(B(x, \delta)) \subset B(\pi x, \epsilon)$ for all $x \in X$.  There exists $m= m(\delta)$ so that for all $\tilde y \in X$, we have $[\tilde y_1 \cdots \tilde y_m] \in B(y, \delta)$.  Then for every $\tilde z\in [\tilde y_1 \cdots \tilde y_{m+n}]$, we have $d_n(\tilde y,\tilde z) < \delta$, and so for $y=\pi\tilde y$ and $z=\pi\tilde z$ we have $d_n(y,z) < \epsilon$, where $d_n(y,z) := \max_{0\leq k\leq n} d(g^n(y), g^n(z))$ is the Bowen metric.


Given any collection $\{(x^1, n_1), \ldots, (x^k, n_k) \} \subset \GGG^M$, 
let $\tilde x^i\in \pi^{-1} x_i$ be such that $\tilde x^i_1 \cdots \tilde x^i_{n_i+m} \in \tilde \GGG^M$.  Since $\tilde\GGG^M$ has specification with  gap length $\tau_M$, we can find connecting words $v^i$ of length $\tau_M$ such that the cylinder
\[
[\tilde x^1_1 \ldots \tilde x^1_{n_1+m} v^1\tilde x^2_1 \ldots \tilde x^2_{n_2+m}v^2 \ldots \tilde x^k_1 \ldots \tilde x^1_{n_k+m}]
\]
is non-empty.  In particular, any point $z$ in the image of this cylinder under $\pi$ verifies the specification property for the collection $\{(x^1, n_1), \ldots, (x^k, n_k) \}$, where the gap length is $m + \tau_M$. 
\end{proof}

\begin{lemma} \label{lem:factor-ent}
$h(\SSS) = 0$.
\end{lemma}
\begin{proof}
Let $w= w_1 w_2 \ldots$ be the $\beta$-expansion of $1$, and let $z = \pi (w)$.  Then, by definition,
\[
\SSS \subset \{ (x, n) \mid x = \pi \tilde x, ~\tilde x_1 \ldots \tilde x_n = w_1 \ldots w_n \}.
\]
Take $\delta$ so that $\pi(B(x, \delta)) \subset B(\pi x, \epsilon)$ for all $x \in X$. There exists $k$ so that if $y \in \SSS_{n+k}$, then there exists $\tilde y \in \pi^{-1}(y) \cap B_n (w, \delta)$, and thus $y \in B_n(z, \epsilon)$. Thus, for all $n$, $\SSS_{n+k} \subset B_n(z, \epsilon)$. In particular, $\Lamb(\SSS_{n+k}, n+k, \epsilon) \leq \Lamb(X, k, \epsilon)$, and $h(\SSS, \epsilon) \leq \lim_{n \rightarrow \infty}\frac{1}{n} \log \Lamb(X, k, \epsilon) = 0$. Since $\epsilon$ was arbitrary, we are done.
\end{proof}
Proposition~\ref{prop:factor-spec} follows from Lemmas~\ref{lem:factor-spec} and \ref{lem:factor-ent}.
\end{proof}

Using Proposition~\ref{prop:factor-spec}, we can apply Theorem~\ref{thm:main} to $(Y, g)$ as long as $\hexp(g,\epsilon) < h(Y)$ for some $\epsilon$. In particular, we can apply Theorem~\ref{thm:main} when $(Y, g)$ is positively expansive.

\bibliographystyle{amsalpha}
\bibliography{h-expansive-mme}

\end{document}